\documentclass[11pt,a4paper,twoside]{article}
\usepackage[UKenglish]{babel}
\usepackage[T1]{fontenc}
\usepackage[utf8x]{inputenc}
\usepackage{latexsym,amsfonts,amsmath,amsthm,amssymb, mathrsfs}
\usepackage{fullpage}
\usepackage{xcolor,bm, bbm}
\usepackage{paralist}
\usepackage{todonotes}
\usepackage{fourier}
\usepackage{amssymb}

\usepackage{cancel}

\usepackage{tikz}

\usepackage[labelformat=empty]{caption}  
\usetikzlibrary{patterns}

 

\DeclareMathOperator{\E}{\mathbb{E}}


\DeclareMathOperator{\R}{\mathbb R}
\DeclareMathOperator{\N}{\mathbb N}
\DeclareMathOperator{\Z}{\mathbb Z}

\makeatletter
\def\moverlay{\mathpalette\mov@rlay}
\def\mov@rlay#1#2{\leavevmode\vtop{%
   \baselineskip\z@skip \lineskiplimit-\maxdimen
   \ialign{\hfil$\m@th#1##$\hfil\cr#2\crcr}}}
\newcommand{\charfusion}[3][\mathord]{
    #1{\ifx#1\mathop\vphantom{#2}\fi
        \mathpalette\mov@rlay{#2\cr#3}
      }
    \ifx#1\mathop\expandafter\displaylimits\fi}
\makeatother

\newcommand{\cupdot}{\charfusion[\mathbin]{\cup}{\cdot}}
\newcommand{\bigcupdot}{\charfusion[\mathop]{\bigcup}{\cdot}}



\newtheorem{thm}{Theorem}[section]

\newtheorem{rem}[thm]{Remark}
\newtheorem{lem}[thm]{Lemma}
\newtheorem{thmalpha}{Theorem}

{
\theoremstyle{definition}

}

{
\theoremstyle{definition}
}


\allowdisplaybreaks
\setlength{\parindent}{0pt}


\begin{document}

\title{A sharp threshold for arithmetic effects on the tail probabilities of lacunary sums}
\author{Christoph Aistleitner, Lorenz Frühwirth, and Joscha Prochno}
\date{}

\maketitle

\begin{abstract}
A classical observation in analysis asserts that lacunary systems of dilated functions show many properties which are also typical for systems of independent random variables. For example, if $(n_k)_{k \geq 1}$ is a sequence of integers satisfying the Hadamard gap condition $n_{k+1}/n_k\geq q > 1,~k  \geq 1$, then the normalized sums $\sum_{n=1}^N \cos(2\pi n_k x)$, considered on the probability space $[0,1]$ with Borel $\sigma$-field and Lebesgue measure, satisfy the central limit theorem (CLT) and the law of the iterated logarithm (LIL). Remarkably, the situation becomes much more deliacate when the simple trigonometric function $\cos(2 \pi x)$ is replaced by a more general 1-periodic function $f$, and fine arithmetic properties of the sequence  $(n_k)_{k \geq 1}$ come into play. The most relevant arithmetic property can be phrased in terms of the number of solutions of certain 2-variable Diophantine equations. Very recently, the authors proved that the validity of the LIL requires a strictly stronger Diophantine criterion than the CLT. In the present paper we show that this is only a special case of a wide-ranging general principle: there is a sharp cutoff, which can be expressed in form of a Diophantine criterion on the sequence $(n_k)_{k \geq 1}$, at which the tail probabilities of normalized sums $\sum_{k=1}^N f(n_k x)$ change from Gaussian to potentially erratic behavior. More precisely, let $L(N,a,b,c)$ be the number of solutions $(k,\ell)$ of the equation $a n_k - b n_\ell = c$, where $1\leq k,\ell \leq N$. Roughly speaking, we prove: if $L(N,a,b,c) \leq N / g_N$ for some $g_N$, then $\mathbb{P} \left[\sum_{k=1}^N f(n_k x)  > t \|f\|_2 \sqrt{N} \right]$ is asymptotically is accordance with standard normal behavior for all $t$ up to $\sqrt{2 \log g_N}$. We also show that this criterion is optimal in the sense that under the same premises, the conclusion can fail to be true for values of $t$ beyond this threshold. 
\end{abstract}

\section{Introduction and main results}

A sequence of positive integers $(n_k)_{k \geq 1}$ is said to satisfy the Hadamard gap condition if there is a number $q>1$ such that
    \begin{align}\label{eq:HGap}
        \frac{n_{k+1}}{n_k} \geq q, \qquad k \geq 1.
    \end{align}
Sequences satisfying this gap condition are also called lacunary sequences, and the trigonometric systems $S_N(\cdot)=\sum_{k=1}^N\cos(2\pi n_k \cdot)$ of functions on $[0,1]$ associated with a lacunary sequence are referred to as lacunary trigonometric systems. When $[0,1]$ is endowed with the Borel $\sigma$-field and Lebesgue measure $\lambda$, the functions $S_N$ can be considered as partial sums of random variables $X_k=\cos(2\pi n_k \cdot)$, $k\in\N$, which are identically distributed and, if all $n_k$, $k\in\N$, are distinct,  also uncorrelated. However, the elements of the sequence $(X_k)_{k\in\N}$ are not independent. Still, as the last decades have shown, as a consequence of the Hadamard gap condition, such systems $S_N$, $N\in\N$, asymptotically behave like sums of independent random variables, and as such satisfy a number of classical limit theorems of probability theory. For instance, in 1939 Kac proved a central limit theorem for a sequence $n_k$, $k\in\N$, with large gaps \cite{Kac_biggaps}, that is, if $n_{k+1}/n_k\to\infty$ as $k\to\infty$. In 1947, Salem and Zygmund proved in \cite{SZ1947} that a central limit theorem holds under the Hadamard gap condition \eqref{eq:HGap}, showing that for any $t\in\R$,
    \begin{align*}
        \lim_{N\to\infty} \lambda\Big( \Big\{\omega\in[0,1]\,:\, \sum_{k=1}^N X_k(\omega) \leq t\sqrt{n/2} \Big\} \Big) = \frac{1}{\sqrt{2\pi}}\int_{-\infty}^t e^{-y^2/2}\,dy.
    \end{align*} 
Since the variance of $\cos(2\pi n_k \cdot)$ is $1/2$, and so 
  \begin{equation} \label{asym_var}
    \int_0^1 \Big(\sum_{k=1}^N X_k(\omega) \Big)^2\,d\omega = \frac{N}{2},
  \end{equation}
the result captures what is expected in the truly independent setting; this result has been significantly strengthened since then, and we refer, e.g., to the work of Berkes \cite{ba} and of Philipp and Stout \cite{psa}.    
Almost a decade later, Salem and Zygmund \cite{SZ1950} and Erd\"os and G\'al \cite{EG1955} studied the behavior beyond the scale of normal fluctuations, obtaining that under the Hadamard gap condition \eqref{eq:HGap} a law of the iterated logarithm holds, i.e., for $\lambda$-almost all $\omega\in[0,1]$,
    \begin{align*}
        \limsup_{N\to\infty} \frac{\sum_{k=1}^N X_k(\omega)}{\sqrt{N \log \log N}} = 1.
    \end{align*}
Note that again this is in perfect accordance with independent behavior, with the ``correct'' normalization term coming from \eqref{asym_var}. It was soon understood that such results do not transfer to the general setting of $1$-periodic, centered, and sufficiently regular functions; in fact, they do not even remain valid for trigonometric polynomials, which arguably are the simplest periodic functions in this context. More specifically, an example attributed to Erdös and Fortet considers the function
  \[
    f(x) := \cos(2\pi x) + \cos(4\pi x)
  \]
and the gap sequence $n_k:=2^k-1$, $k\in\mathbb N$, for which it can be shown that the sequence of partial sums $N^{-1/2}\sum_{k=1}^N f(n_k \cdot)$, $N\in\mathbb N$, has a non-Gaussian limit distribution commonly referred to as a ``variance mixture Gaussian''. For a detailed explanation, we refer to \cite{ABT2024}, but at its core, the failure of a Gaussian limit theorem stems from the fact that the linear Diophantine equation 
  \[
    n_{k+1} - 2n_k = 1
  \]
possesses too many solutions in $k$ for the sequence $(n_k)_{k\geq 1}$ (for more information on the Erd\H os--Fortet example, see also \cite{cb,fm}). This already shows that there is a delicate interplay between arithmetic and analytic properties that affect the probabilistic behavior. A characterization under which central limit behavior as in the truly independent case can be observed was obtained by Aistleitner and Berkes. We consider the linear two-variable Diophantine equation 
  \[
    a n_k - b n_\ell = c
  \]
with $a,b\in\mathbb N$ and $c\in\mathbb Z_{\geq 0}$ fixed, and are interested in the number of solutions $(k,\ell)$, when the size of $k$ and $\ell$ is bounded above by some threshold value. More precisely, for $a,b \in \N$, let
\begin{equation}
    \label{Eq_numb_sol}
L(N,a,b) := \sup_{c \in \Z} \# \big\{1\leq k, \ell \le N \,:\, a n_k - bn_\ell =c \big\},
\end{equation}
where for $a=b$, the supremum is only taken over $c \neq 0$; note that, due to symmetry, one can always take the supremum over all non-negative integers $c$, and we allow the implicit constant to depend on $a,b$. In 2010, Aistleitner and Berkes showed in \cite{AB2010} the remarkable result that assuming $L(a,b,N) = o(N)$ is sufficient for 
\[
\frac{S_N(x)}{\sqrt{N}} := \frac{1}{\sqrt{N}}\sum_{k=1}^N f(n_k x), \quad x\in[0,1],
\]
to satisfy a central limit theorem with asymptotic variance $\| f \|_2^2 := \int_0^1 f(x)^2 dx$, where $f$ can be taken from a reasonably large class of periodic functions. In \cite{AB2010}, the authors also show that this Diophantine condition is optimal. Shortly after, Aistleitner was able to establish laws of the iterated logarithm (LIL) for similar types of lacunary sums \cite{A2010}. However, there is a notable drawback; in order to establish LILs that are in accordance with truly independent behavior, it was necessary to assume $L(N, a,b) = O\left( \frac{N}{(\log N)^{1+ \varepsilon}} \right)$, for some $\varepsilon > 0$. For a decade it was believed that the additional logarithmic factor in this condition (in comparison with the criterion for the CLT case) was an artifact stemming from the method of proof, but as was shown by the authors of the present paper in \cite{AFP_Lil}, it is actually necessary in general. It thus seems that there is an intricate interplay between the number of Diophantine solutions $L(N, a,b)$ on the one hand, and the question to what extent the tail probabilities of $S_N$ behave like those of a sum of i.i.d.~random variables on the other hand. In this paper, we establish such a connection to be true as a general principle: having better control over the number of solutions of the relevant Diophantine equations allows one to have control of the tail probabilities of the lacunary sums further out in the tail. More precisely, we show that if $L(N,a,b)= O\left(\frac{N}{g_N} \right)$, for some sufficiently well-behaved sequence $(g_N)_{N \in \N}$ with $1 \leq g_N \leq N$, then for any mean-zero trigonometric polynomial $f$, we have
\[
\frac{\mathbb{P} \left[ S_N(x) \geq t \| f \|_2 \sqrt{2N \log g_N} \right]}{1-\Phi(t \sqrt{2 \log g_N})} \stackrel{N \rightarrow \infty}{\longrightarrow} 1
\]
as long as $t \in (0,1)$, where $\Phi$ denotes the standard normal distribution function. If on the other hand $t > 1$, then we can find a trigonometric polynomial $f$ such that
\[
\frac{\mathbb{P} \left[ S_N(x) \geq t \| f \|_2 \sqrt{2N \log g_N} \right]}{1-\Phi(t \sqrt{2 \log g_N })} \stackrel{N \rightarrow \infty}{\longrightarrow} \infty.
\]
This shows that when imposing a bound $L(N,a,b)= O\left(\frac{N}{g_N} \right)$ on the maximal number of solutions of the Diophantine equations, the tail behavior of $S_N$ undergoes a transition at the critical value $\sqrt{2 \log g_N }$: from a range where the tail probabilities follow the standard normal distribution as a consequence of the bound for the number of solutions of the Diophantine equations, into a range where the Diophantine assumption loses its usefulness and the tail probabilities can start to deviate strongly from those of the standard normal distribution. Note that our theorems are consistent with the threshold identified in \cite{A2010, AFP_Lil} for the particular case of the LIL, where the relevant values of $g_N$ were $g_N = (\log N)^{1+\varepsilon}$ resp.\ $g_N = (\log N)^{1-\varepsilon}$ . Interestingly, the whole phenomenon discussed in the present paper does not exist if one considers only pure cosine lacunary sums $\sum_{k =1}^N \cos(2 \pi n_k x)$; see also \cite{AFHM_mgf} for this case. \\

Our first theorem contains the sufficiency part of the general principle described above: the better control one has over arithmetic effects, the further out one can control the behavior of the tail probabilities. 
\begin{thmalpha}
\label{thm_iid}
Let $(g_N)_{N \in \N}$ be a sequence such that $1 \leq g_N \leq N$ for all $N\in\N$ and so that both $g_N$ and $\frac{N}{g_N}$ are monotone increasing in $N$ with $g_N \longrightarrow \infty$. Let $f: \mathbb{R} \to \mathbb{R}$ be a non-vanishing trigonometric polynomial with mean zero and degree $d$.  Assume that
\begin{equation}
\label{Eq_Cond_Dio_1}
 L(N,a,b)= O\left( \frac{N}{g_N} \right) \qquad \text{for all $a,b \in \{1, \dots, d\}$.}
\end{equation}
Let $\varepsilon > 0$. Then 
\[
\lim_{N \rightarrow \infty} \,\sup_{0 \le t \le  (1-\varepsilon) \sqrt{2 \log g_N}} \frac{\mathbb{P} \left[S_N \geq t \|f\|_2  \sqrt{N}  \right]}{1 - \Phi(t)} = 1.
\]
\end{thmalpha}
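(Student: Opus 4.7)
The plan is to establish a sharp asymptotic on the moment generating function
\[
M_N(\lambda) := \E\!\left[e^{\lambda S_N}\right] = \exp\!\Bigl(\tfrac{1}{2}\lambda^2 N\|f\|_2^2 + o(1)\Bigr),
\]
uniformly for $|\lambda| \le (1-\varepsilon/2)\sqrt{2\log g_N/N}/\|f\|_2$, and to convert this into the desired tail asymptotic via an exponential tilting argument. For $t$ in any bounded range the claim reduces to the central limit theorem of Aistleitner--Berkes (whose hypothesis $L(N,a,b)=o(N)$ is automatic from ours), so the substantive regime is $t \to \infty$. Note that a bare Chernoff bound would only give the sub-Gaussian upper bound $e^{-t^2/2}$ and would lose the critical polynomial prefactor $(t\sqrt{2\pi})^{-1}$ present in $1-\Phi(t)$, which is why the refinement via tilting is necessary.

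\textbf{Moment computation.} The MGF estimate is derived by the method of moments. Writing $f(x)=\sum_{0<|j|\le d} c_j e^{2\pi i j x}$, one has for even $p$
\[
\E[S_N^{p}] = \sum_{k_1,\dots,k_p=1}^{N}\,\sum_{0<|j_r|\le d}\Bigl(\prod_r c_{j_r}\Bigr)\,\mathbbm{1}\!\Bigl[\textstyle\sum_r j_r n_{k_r}=0\Bigr].
\]
The \emph{diagonal} configurations, in which the indices pair up as $k_{2i-1}=k_{2i}$ (all pairs distinct) with $j_{2i-1}=-j_{2i}$, contribute exactly the Gaussian moment $(p-1)!!\,(N\|f\|_2^2)^{p/2}$ up to lower-order boundary effects. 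Every off-diagonal configuration corresponds to a non-trivial solution of the frequency equation; reducing this relation one index at a time (as in the Aistleitner--Berkes CLT argument) the counting is performed via two-variable linear equations $a n_k - b n_\ell = c$ with $a,b\in\{1,\dots,d\}$, each admitting at most $O(N/g_N)$ solutions by assumption. Thus each non-paired index contributes an arithmetic saving $1/g_N$, and a careful shape-by-shape bookkeeping yields
\[
\E[S_N^p] = (p-1)!!\,(N\|f\|_2^2)^{p/2}\,(1+o(1))
\]
uniformly for even $p \le C\log g_N$. Summing the Taylor series of the exponential then delivers the MGF asymptotic throughout the admissible range of $\lambda$.

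\textbf{From MGF to the tail.} Set $\lambda = t/(\|f\|_2\sqrt N)$ and introduce the tilted law $d\tilde{\mathbb{P}}_\lambda = e^{\lambda S_N}\,d\mathbb{P}/M_N(\lambda)$. Then
\[
\mathbb{P}\bigl[S_N \ge t\|f\|_2\sqrt{N}\bigr] = M_N(\lambda)\,e^{-\lambda t\|f\|_2\sqrt N}\,\tilde{\E}_\lambda\!\left[e^{-\lambda(S_N - t\|f\|_2\sqrt N)}\mathbbm{1}_{\{S_N\ge t\|f\|_2\sqrt N\}}\right],
\]
and the first two factors combine into $e^{-t^2/2}(1+o(1))$ by the MGF estimate. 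Under $\tilde{\mathbb{P}}_\lambda$ the same moment computation (the tilt merely redistributes the Diophantine shapes) yields a CLT for $S_N$ with mean $t\|f\|_2\sqrt N(1+o(1))$ and variance $N\|f\|_2^2(1+o(1))$; evaluating the remaining integral $\int_0^\infty e^{-ty}\phi(y)\,dy \sim (t\sqrt{2\pi})^{-1}$ produces precisely the prefactor needed to match $1-\Phi(t)$.

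\textbf{Main obstacle.} The crux is the combinatorial control in Step~2: for a configuration with $2k$ "excess" indices not participating in trivial pairings, the arithmetic count is suppressed by a factor $g_N^{-k}$, and one must prove that the number of such shapes is dominated by $g_N^{k-o(k)}$, uniformly in moment order up to $p \approx \log g_N$. This delicate balance between shape combinatorics and Diophantine savings is what produces the sharp threshold $\sqrt{2\log g_N}$; losing even a constant factor per excess index in the bookkeeping would prevent covering the full range of $t$ up to $(1-\varepsilon)\sqrt{2\log g_N}$.
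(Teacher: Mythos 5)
Your proposal takes a genuinely different route from the paper. The paper does not use the method of moments or exponential tilting at all: it decomposes $\{1,\dots,N\}$ into long blocks $\Delta_i$ separated by short buffer blocks $\Delta_i'$, conditions on a dyadic filtration to produce a martingale-difference array $(\xi_i)$ approximating $\|f\|_2^{-1}N^{-1/2}\sum_{k\in\Delta_i}f(n_kx)$, and then applies Grama's moderate-deviation theorem for martingales. The decisive point is that Grama's theorem requires only two finite-order quantities — $L_4^n = \E\sum_i|\xi_i|^4$ and $N_4^n = \E|\langle X\rangle_n - 1|^2$ — and the Diophantine hypothesis enters exactly once, to bound $N_4^n = O(1/g_N)$, via the identity that reduces the conditional variance to a Fourier expansion whose coefficients $r_h$ are counts of $2$-term Diophantine solutions, so that $\sup_h|r_h| = O(N/g_N)$ and $\sum_h|r_h| = O(N)$ give $N_4^n = O(1/g_N)$. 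The threshold $\sqrt{2\log g_N}$ then drops out of Grama's condition $0\le r\le\sqrt{2\eta|\log(L_4^n+N_4^n)|}$. In other words, the martingale machinery converts a \emph{fourth-moment} estimate into a full moderate-deviation statement.

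Your moment/tilting approach has two genuine gaps that the martingale route avoids. First, you need to show $\E[S_N^p] = (p-1)!!\,(N\|f\|_2^2)^{p/2}(1+o(1))$ uniformly for even $p$ up to order $\log g_N$, using only the $2$-term condition $L(N,a,b)=O(N/g_N)$. You correctly identify this as the main obstacle, but the claim that ``each non-paired index contributes an arithmetic saving $1/g_N$'' after a one-variable-at-a-time reduction is unjustified: reducing a $p$-term relation $\sum_r j_r n_{k_r}=0$ to chained $2$-term relations does not obviously produce $p/2 - 1$ independent $g_N^{-1}$-savings (the intermediate sums are not of the form $an_k - bn_\ell = c$ with fixed $c$, and lacunarity alone can force structure but also force dependencies between the reduced equations). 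When $p\asymp\log g_N$, the number of non-pairing shapes grows at least like $c^p p!$, which is of the same rough order as the target $(p-1)!!$, so a loss of even a constant per excess index is fatal — you acknowledge this but give no mechanism for avoiding it, whereas the paper never needs any moment beyond the fourth. Second, the tilting step requires a CLT (in fact a local/uniform normal approximation to retrieve the $(t\sqrt{2\pi})^{-1}$ prefactor) under the tilted law $d\tilde{\mathbb P}_\lambda\propto e^{\lambda S_N}\,d\mathbb P$. The assertion that ``the same moment computation works under the tilt'' is not obvious: the tilted measure is no longer Lebesgue on $[0,1]$, the orthogonality structure of dilated trigonometric functions is lost, and there is no given reason why the Diophantine counts would control tilted moments. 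This is a nontrivial Cramér-type step that would itself require substantial work. As written, both halves of your argument rest on unproven claims, so the proposal does not constitute a proof; the paper's martingale approach is both different and strictly less demanding, since it replaces uniform high-moment control by fourth-moment control plus an off-the-shelf martingale moderate-deviation theorem.
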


As a corollary of Theorem \ref{thm_iid}, we recapture the main result of \cite{AB2010}, namely the central limit theorem for $\sum f(n_k x)$ under the Diophantine condition $L(N,a,b) = o(N)$. Indeed, applying Theorem \ref{thm_iid} with any divergent $g_N$ allows us to conclude that $\mathbb{P} \left[S_N \leq t \| f\|_2  \sqrt{N}  \right] \to \Phi(t)$ for any fixed $t\in\mathbb R$. Theorem \ref{thm_iid} also contains the key ingredient for the proof of the main result of \cite{A2010}, namely for the law of the iterated logarithm for $\sum f(n_k x)$ under the Diophantine condition $L(N,a,b) = O(N / (\log N)^{1 + \eta})$ for some $\eta > 0$. By Theorem \ref{thm_iid}, the Diophantine assumption $L(N,a,b) = O(N / (\log N)^{1 + \eta})$ allows us to control the tail probabilities for $t$ in the range up to $t \leq (1-\varepsilon) \sqrt{1 + \eta} \sqrt{2 \log \log N}$, which leads to the LIL when $\varepsilon>0$ is chosen so small in comparison with $\eta$ that $(1-\varepsilon)\sqrt{1+\eta} > 1$.\\

Our second theorem contains the necessity part of the general principle mentioned before, by showing that the Diophantine criterion in Theorem \ref{thm_iid} is essentially optimal: a slightly weaker Diophantine assumption would not allow to control the tail probabilities throughout such a large range. 
\begin{thmalpha}
\label{thm_not_iid}
Let $(g_N)_{N \in \N}$ be a sequence such that $g_N$ and $\frac{N}{g_N}$ are both monotonically increasing and diverging (as $N \to \infty$). Then,  for every given $\varepsilon > 0$, there exist a lacunary sequence $(n_k)_{k \in \N}$ and a mean zero trigonometric polynomial $f$ such that for infinitely many values of $N$ we have
\begin{equation}
\label{Eq_Cond_Dio}
 L(N,a,b)= O\left( \frac{N}{g_{N}} \right)  
\end{equation}
but
\[
\limsup_{N \rightarrow \infty} \frac{ \mathbb{P} \left[ S_{N} \geq t \sqrt{N}  \right]}{ 1- \Phi(t )} = \infty,
\]
where $t = (1+ \varepsilon) \| f \|_2  \sqrt{ 2 \log g_{N}} $.
\end{thmalpha}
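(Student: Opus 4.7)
The plan is to run an Erd\H os--Fortet type construction. Choose the polynomial $f(x) = \cos(2\pi x) + \cos(4\pi x)$, so that $\|f\|_2^2 = 1$ and $f$ has mean zero. The algebraic heart of the argument is that whenever $n_{k+1} = 2 n_k + c$, one has
\[
\cos(4\pi n_k x) = \cos(2\pi n_{k+1} x)\cos(2\pi c x) + \sin(2\pi n_{k+1} x)\sin(2\pi c x),
\]
which rewrites a stretch of ``doubling'' indices as a low-frequency factor times a high-frequency lacunary sum; this is the mechanism responsible for the classical Erd\H os--Fortet mixture Gaussian phenomenon.

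The sequence $(n_k)$ is then built in stages along a subsequence $N_j \to \infty$. At stage $j$, starting from a value that is astronomically larger than everything chosen before (so that later frequencies are essentially ``independent'' of earlier ones in the Fourier sense), one appends a super-lacunary padding followed by a resonance block $B_j$ of length $M_j = \lfloor N_j / g_{N_j}\rfloor$ on which $n_{k+1} = 2 n_k + c_j$ holds, where the shifts $c_j$ are chosen distinct across stages (for instance $c_j = j$). The wide separation of scales guarantees that, for every admissible pair $(a,b)$, every solution of $a n_k - b n_\ell = c$ comes from a single block, and hence
\[
L(N_j, a, b) \;\le\; \max_{i \le j} M_i + O(1) \;\le\; C \cdot N_j/g_{N_j},
\]
using the monotonicity of $N/g_N$. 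This verifies the Diophantine bound along the subsequence $N_j$.

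On each block, the identity above combined with the lacunary CLT yields
\[
\sum_{k \in B_i} f(n_k x) \;\approx\; (1 + \cos(2\pi c_i x)) \, C_i(x) + \sin(2\pi c_i x) \, S_i(x),
\]
where $C_i, S_i$ are lacunary cosine and sine sums of $M_i$ terms at extremely high frequency. A standard separation-of-scales argument (the slowly varying factors $\cos(2\pi c_i x), \sin(2\pi c_i x)$ decorrelate from the rapidly oscillating $C_i, S_i$) shows that, conditionally on the low-frequency coordinate, $S_{N_j}(x)$ is approximately Gaussian with conditional variance
\[
V(x) \;=\; \Big(N_j - \sum_{i \le j} M_i\Big) + \sum_{i \le j} M_i \bigl(1 + \cos(2\pi c_i x)\bigr).
\]
With a Dirichlet-type choice of shifts $c_i$, the function $V$ has pronounced peaks on a set $A_j \subseteq [0,1]$ of small measure on which $V(x)$ is strictly larger than $N_j$.

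The tail lower bound then follows by integrating the conditional Gaussian tail over $A_j$:
\[
\mathbb{P}\bigl[S_{N_j} \geq t_{N_j} \sqrt{N_j}\bigr] \;\geq\; \int_{A_j} \Bigl(1 - \Phi\bigl(t_{N_j}/\sqrt{V(x)/N_j}\bigr)\Bigr) \, dx,
\]
with $t_{N_j} = (1+\varepsilon)\|f\|_2 \sqrt{2 \log g_{N_j}}$. The hardest part of the argument, and where the main technical difficulty lies, is the calibration: one must choose the number of blocks, their sizes $M_i$, and the shifts $c_i$ so that the peak measure $|A_j|$ and the variance inflation $V/N_j$ jointly beat the Gaussian denominator $1-\Phi(t_{N_j})$ for \emph{every} $\varepsilon>0$. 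A naive single-block construction only suffices once $\varepsilon$ is not too small, and pushing the threshold down to all $\varepsilon>0$ requires exploiting Dirichlet concentration across many blocks combined with a sharp moderate-deviation analysis for the combined high-frequency lacunary sums.
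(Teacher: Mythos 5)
Your proposal runs into a genuine obstruction because you fix $f(x)=\cos(2\pi x)+\cos(4\pi x)$ at the outset. With this choice, the conditional per-index variance on a resonance block is, as you correctly compute, $1+\cos(2\pi c_i x)$, which is bounded above by $2=2\|f\|_2^2$. This hard cap on the variance-inflation factor is fatal for small $\varepsilon$. Quantitatively: if the inflated variance on the peak set $A_j$ is $V\approx N(1+\alpha)$ with $\alpha\le 1$ and $|A_j|=\mu$, the tail ratio behaves like $\mu\,g_N^{(1+\varepsilon)^2\alpha/(1+\alpha)}$. The Diophantine constraint forces each resonance block to have at most $\approx N/g_N$ indices, so using $K$ blocks with distinct shifts one gets $\alpha\approx K/g_N\le 1$, while the set on which all $K$ shifts resonate simultaneously has measure $\mu\lesssim 1/K=1/(\alpha g_N)$ --- Dirichlet-type concentration over many shifts does not beat this, since the gain in $\alpha$ is exactly offset by the loss in measure. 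Optimizing over $\alpha\in(0,1]$ gives
\[
\limsup_N \frac{\log(\text{tail ratio})}{\log g_N}\;\le\;-1+\frac{(1+\varepsilon)^2}{2},
\]
which is positive only when $\varepsilon>\sqrt{2}-1$. So ``sharp moderate-deviation analysis'' and clever block calibration cannot rescue the degree-$2$ polynomial; the threshold $\varepsilon_0=\sqrt{2}-1$ is intrinsic to that $f$.

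The paper avoids this by letting the degree of $f$ depend on $\varepsilon$: it takes $f(x)=\sum_{j=0}^{d-1}\cos(2\pi 2^j x)$ with $\|f\|_2^2=d/2$ and a sequence $n_k=2^{2^{2^{2^i}}}(2^{k+im}+m)$ designed so that on a small interval (of measure $\approx 1/g_N$) the main term collapses to $d\sum_{m,k}\cos(2\pi 2^{k+Im}x)$, a pure lacunary cosine sum with per-index variance $d^2/2$. The variance-inflation factor is thus $\approx d$, which can be made arbitrarily large; the exponent becomes $-1-\frac{2s^2}{d^2}+(1+\varepsilon)^2$ with $s\approx(1+\varepsilon)\sqrt{d/2}$, and choosing $d$ large makes $\frac{2s^2}{d^2}\approx\frac{(1+\varepsilon)^2}{d}$ as small as needed. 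Your construction idea (stage-wise appended resonance blocks, shift-dependent Diophantine budget, conditional Gaussian with variable variance) is the right framework, but you must replace the fixed Erd\H os--Fortet polynomial by one of $\varepsilon$-dependent degree, and the resonance mechanism should be the ``telescoping'' self-overlap of the $d$ cosine harmonics rather than the $2n_k\mapsto n_{k+1}$ doubling alone.
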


Broadly speaking, the results in this paper show how upper bounds on $L(N,a,b)$ cause the tail probabilities of the lacunary sum to be (asymptotically) in accordance with those of the standard normal distribution. Note, however, that the assumption $L(N,a,b) = O(N/g_N)$ only makes sense when $g_N$ is smaller than $N$, since $L(N,a,b)$ by definition always is a positive integer. Thus the relation between two-term Diophantine equations and tail probabilities of lacunary sums cannot extend any further than out to tails of order $\sqrt{2 \log N}$ (where the tail probabilities of the standard normal distribution are decaying as $\approx 1/N$). Accordingly, the tail probabilities of lacunary sums at the scale of larger deviations must be controlled by different effects, other than two-term Diophantine equations; this is a very interesting topic which was touched upon in \cite{agkpr,fjp}, but which is out of scope of the present paper. 

\section{Proof of Theorem \ref{thm_iid}}

Our proof uses martingale methods, which were introduced to the theory of lacunary sums of dilated functions independently by Berkes \cite{ba} and Philipp and Stout \cite{psa} (see also Berkes \cite{berk2,berk1}). We will apply the following result of Grama \cite[Theorem 2.5]{grama} (with the choice of $\delta=1$ in his more general formulation).

\begin{thm}
Let $X^n = (X_k^n,\mathcal{F}_k^n)_{0 \leq k \leq n}$ be a triangular array of square integrable martingales, where $X_0^n=0$ almost surely. Let $\xi_k^n = X_k^n - X_{k-1}^n$ and 
$$
\langle X^n \rangle_k = \sum_{0 < i \leq k} \E \left( \left(\xi_i^n\right)^2 | \mathcal{F}_{i-1}^n \right), \qquad k=1,\dots,n, \quad n =1,2,\dots.
$$
Let 
$$
L_4^n = \E \left( \sum_{0 < i \leq n} \left| \xi_i^n \right|^4 \right) 
$$
and 
$$
N_4^n = \E \left( \left| \langle X^n \rangle_n -1 \right|^2 \right).
$$
Assume that $L_4^n + N_4^n < 1$ and $0 <\eta < 1$. Then uniformly in $r$, subject to $0 \leq r \leq \sqrt{2 \eta \left| \log \left (L_4^n + N_4^n \right)\right|}$, we have
$$
\frac{\mathbb{P} (X_n^n \geq r)}{1 - \Phi(r)} \to 1, \qquad \frac{\mathbb{P} (X_n^n \leq -r)}{\Phi(-r)} \to 1, \qquad \text{as $n \to \infty$.}
$$
\end{thm}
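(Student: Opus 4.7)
The plan is to prove this martingale moderate deviation theorem via the classical \emph{conjugate} (Esscher) change of measure, adapted to the martingale setting. For a real parameter $\lambda$ to be optimised later, form the exponential martingale
\[
Z_k(\lambda) \;=\; \prod_{i=1}^{k} \frac{e^{\lambda \xi_i^n}}{\mathbb{E}\!\left[e^{\lambda \xi_i^n}\,\big|\,\mathcal{F}_{i-1}^n\right]}, \qquad k=0,1,\dots,n,
\]
and the associated tilted probability measure $d\widetilde{P}_\lambda = Z_n(\lambda)\,dP$. The strategy is to pick $\lambda$ of order $r$ so that under $\widetilde{P}_\lambda$ the martingale $X_n^n$ is re-centred near $r$, and then recover the original tail via the Bayes-type identity $\mathbb{P}(X_n^n \geq r) = \mathbb{E}^{\widetilde{P}_\lambda}\!\big[Z_n(\lambda)^{-1} \mathbf{1}\{X_n^n \geq r\}\big]$.

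First I would produce precise bounds for the conditional log-Laplace transform $\log \mathbb{E}[e^{\lambda \xi_i^n}\mid \mathcal{F}_{i-1}^n]$. Since $\mathbb{E}[\xi_i^n \mid \mathcal{F}_{i-1}^n]=0$, a Taylor expansion in $\lambda$ produces $\tfrac{\lambda^2}{2}\,\mathbb{E}[(\xi_i^n)^2\mid\mathcal{F}_{i-1}^n]$ plus a remainder controlled by the higher-order conditional moments of $\xi_i^n$. Summing over $i$ and exponentiating yields
\[
Z_n(\lambda)^{-1} \;=\; \exp\!\Big(-\lambda X_n^n + \tfrac{\lambda^2}{2}\,\langle X^n\rangle_n + R_n(\lambda)\Big),
\]
where the remainder $R_n(\lambda)$ is to be bounded in terms of $L_4^n$ (this is where the choice $\delta=1$ in Grama's more general formulation pins down the relevant moment), while $|\langle X^n\rangle_n - 1|$ is controlled in $L^2$ by $N_4^n$. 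Substituting $\lambda = r$ into the Bayes identity, the leading term $\exp(-r\cdot r + r^2/2) = e^{-r^2/2}$ produces the Gaussian factor required by the conclusion.

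The second main ingredient is a quantitative CLT for the martingale differences $(\xi_i^n)$ under the tilted measure $\widetilde{P}_\lambda$; a Berry--Esseen-type rate (e.g.\ via a Lindeberg or Heyde--Brown argument) suffices. Under $\widetilde{P}_\lambda$, $X_n^n$ has mean approximately $\lambda\langle X^n\rangle_n$ and variance approximately $\langle X^n\rangle_n$; with $\lambda=r$ and the stability of $\langle X^n\rangle_n$ captured by $N_4^n$, these become $\approx r$ and $\approx 1$. Feeding the martingale CLT into the Bayes identity and comparing with the Mill's ratio asymptotic $1 - \Phi(r) \sim \frac{1}{r\sqrt{2\pi}}e^{-r^2/2}$ should give $\mathbb{P}(X_n^n \geq r)/(1-\Phi(r)) = 1 + o(1)$, with the $o(1)$ quantified in terms of $r$, $L_4^n$, and $N_4^n$.

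The main obstacle will be achieving uniformity of this ratio over the full range $0 \leq r \leq \sqrt{2\eta\,|\log(L_4^n + N_4^n)|}$. The subtlety is that the ratio itself is of order $e^{r^2/2}$ relative to the absolute probabilities involved, so every error must be quantified \emph{exponentially} rather than polynomially in $r$; otherwise an ostensibly small remainder swamps the Gaussian factor $e^{-r^2/2}$. The crux is thus the quantitative claim that the cumulative remainder from the log-Laplace expansion together with the Berry--Esseen defect from the martingale CLT combine to a bound of the form $(L_4^n + N_4^n)^{1-\eta+o(1)}$ after being weighted by $e^{r^2/2}$. This threshold calculation dictates precisely the endpoint of the stated range for $r$ and explains why $0<\eta<1$ is required. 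Finally, the symmetric statement $\mathbb{P}(X_n^n \leq -r)/\Phi(-r) \to 1$ follows by applying the entire argument to the martingale $-X_n^n$.
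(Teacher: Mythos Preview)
The paper does not prove this theorem at all: it is quoted verbatim as a known result of Grama \cite[Theorem 2.5]{grama} and is used only as a black box in the proof of Theorem~\ref{thm_iid}. So there is no ``paper's own proof'' to compare against.

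That said, your sketch is in fact a reasonable outline of how Grama's theorem is actually proved. The conjugate (Esscher) change of measure via the exponential martingale $Z_n(\lambda)$, the expansion of the conditional cumulant generating function with remainder controlled by $L_4^n$, the use of $N_4^n$ to stabilise $\langle X^n\rangle_n$ near $1$, and the combination with a Berry--Esseen-type martingale CLT under the tilted measure --- this is precisely the architecture of Grama's argument. Your identification of the key difficulty (that all error terms must survive multiplication by $e^{r^2/2}$, which is what forces the cutoff $r \leq \sqrt{2\eta\,|\log(L_4^n+N_4^n)|}$ with $\eta<1$) is also correct and is the heart of the matter.

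What you have written is a plan rather than a proof: the actual execution requires careful bookkeeping of the remainder $R_n(\lambda)$ uniformly in $\lambda$ over the relevant range, and a suitably quantitative martingale CLT under $\widetilde P_\lambda$ whose error combines correctly with the Mills-ratio asymptotic. None of this is trivial, but there is no visible gap in the strategy. For the purposes of the present paper, however, a citation to Grama suffices.
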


For simplicity of writing we assume throughout the proof that $f$ is an even function, i.e.\ $f$ is a sum of cosine-terms. The proof in the general case (functions $f$ which include also sine-terms) is exactly the same, only that the formulas become slightly more complicated. Accordingly, assume that
$$
f(x) = \sum_{j=1}^d c_j \cos(2 \pi j x)
$$
(note that there is no term for $j=0$ since by assumption $f$ has mean zero). \\

Throughout this section, implied constants are only allowed to depend on the sequence $(n_k)_{k \geq 1}$ and its growth factor $q$, as well as on the function $f$ and on the implied constants in assumption \eqref{Eq_Cond_Dio_1}.\\

Let $N\in\mathbb N$ be given, and assume that $N$ is ``large''. We decompose the index set $\{1, \dots, N \}$ into a disjoint union 
\begin{equation} \label{eq_disjoint_union}
\{1 , \dots, N\} = \Delta_1\cupdot \Delta_1' \cupdot \Delta_2 \cupdot \Delta_2' \cupdot \dots \cupdot \Delta_n \cupdot \Delta_n',
\end{equation}
such that all elements of $\Delta_1$ are smaller than all elements of $\Delta_1'$, all elements of $\Delta_1'$ are smaller than all elements of $\Delta_2$, and so on. We do this in such a way that, for all $i\in\{1,\dots,n\}$,
$$
\# \Delta_i = (\log N)^5 \quad\text{and} \quad \# \Delta_i' = 6 \log_q N, 
$$
where $q$ is the growth factor of the lacunary sequence. For ease of writing, we will treat $(\log N)^5$ and $6 \log_q N$ as integers, and assume that $N$ is indeed divisible without remainder by their sum -- otherwise one would have to use Gauss brackets to make everything integer-valued (at the expense of formulas which are much more difficult to read), which would introduce small error terms that do not affect the overall result. By construction, we then have
\begin{equation} \label{eq_number_blocks}
n = O \left(N / (\log N)^5 \right),
\end{equation}
and
\begin{equation} \label{eq_**}
N - \sum_{i=1}^n \# \Delta_i = \sum_{i=1}^n \#\Delta_i' = O \left(N / (\log N)^4 \right).
\end{equation}
The purpose of the ``dashed'' blocks $\Delta_i'$ is to separate the elements of the ``un-dashed'' blocks $\Delta_i$ for different values of $i\in\{1,\dots,n\}$. The overall contribution coming from indices in the dashed blocks will turn out to be negligible in comparison to the contribution of the indices in the un-dashed blocks. \\

For $1 \leq i \leq n$, we let $\mathcal{F}_i$ denote the $\sigma$-field generated by all intervals of the form 
$$
\left[ j 2^{-\left\lceil 3 \log_2 N + \log_2 (\max_{k \in \Delta_{i}} n_k ) \right\rceil}, (j+1) 2^{-\left\lceil 3 \log_2 N + \log_2 (\max_{k \in \Delta_{i}} n_k ) \right\rceil} \right), \qquad 0 \leq j < 2^{\left\lceil 3 \log_2 N + \log_2 (\max_{k \in \Delta_{i}} n_k ) \right\rceil}.
$$
For the sake of completeness, we also define $\mathcal{F}_0$ as the trivial sigma-field consisting only of $\emptyset $ and  $[0,1]$. The dyadic construction ensures that $\mathcal{F}_1, \dots, \mathcal{F}_n$ is a filtration, and will also ensure that for $k \in \Delta_i$ we have $\E(f(n_k x) | \mathcal{F}_{i}) \approx f(n_k x)$ for all $x$, while $\E(f(n_k x) | \mathcal{F}_{i-1}) \approx 0$. This is a consequence of the quick growth of the lacunary sequence, and of the presence of the ``dashed'' blocks in \eqref{eq_disjoint_union}. More precisely, let
$$
\xi_i = \frac{1}{\|f\|_2 \sqrt{\sum_{i=1}^n \# \Delta_i}} \left( \E \left( \sum_{k \in \Delta_i} f(n_k x) \Big| \mathcal{F}_{i} \right) - \E \left( \sum_{k \in \Delta_i} f(n_k x) \Big| \mathcal{F}_{i-1} \right) \right).
$$
Then, by construction, $\xi_i, ~1 \leq i \leq n$, is a martingale difference, and $X_i = \sum_{j=1}^i \xi_i,~1 \leq j \leq n$, is a martingale (which of course is square-integrable, since everything is finite), which is adapted to the filtration $(\mathcal{F}_i)_{1 \leq i \leq n}$. \\

A simple calculation shows that for any positive real numbers $a,b,c$ with $a<b$, and for any 1-periodic function $g$ with mean zero, one has
$$
\left| \int_a^b g(c x ) ~dx \right| \leq \frac{1}{c} \|g\|_\infty
$$
(see \cite[Lemma 2.2]{AB2010} for a proof). Thus, for any $k \in \Delta_i$, we have
\begin{eqnarray}
\E \left( \sum_{k \in \Delta_i} f(n_k x) \Big| \mathcal{F}_{i-1} \right) & \leq & 2^{\left\lceil 3 \log_2 N + \log_2 (\max_{k \in \Delta_{i-1}} n_k ) \right\rceil} \sum_{k \in \Delta_i} \frac{1}{n_k} \|f\|_\infty \nonumber\\
& = & O \left( \frac{N^3 \# \Delta_i \max_{k \in \Delta_{i-1}} n_k}{\min_{k \in \Delta_{i}} n_k } \right)  \nonumber\\
& = & O \left(N^3 (\log N)^5 q^{-6 \log_q N} \right) \nonumber\\
& = & O \left(N^{-3} (\log N)^5 \right), \label{eq_small_int}
\end{eqnarray}
as $N \to \infty$. Note how this argument used the quick growth of the sequence $(n_k)_{k \geq 1}$, and the fact that by construction the indices in $\Delta_{i-1}$ and those in $\Delta_i$ are well-separated from each other.\\

Taking derivatives, we note that 
$$
\left| \frac{d}{dx} f(n_k x) \right| = O (n_k), \qquad \text{uniformly in $x$}. 
$$
Thus, in view of the length of the intervals that generate $\mathcal{F}_{i}$, we have
\begin{eqnarray*}
 \left| \sum_{k \in \Delta_i} f(n_k x)  - \E \left( \sum_{k \in \Delta_i} f(n_k x) \Big| \mathcal{F}_{i} \right) \right| & = & O \left(\sum_{k \in \Delta_i}  n_k 2^{-\left\lceil 3 \log_2 N + \log_2 (\max_{k \in \Delta_{i}} n_k ) \right\rceil} \right) \\
 & = & O \left( \sum_{k \in \Delta_i} N^{-3} \right) \\
 & = & O \left( N^{-3} (\log N)^5 \right),
\end{eqnarray*}
uniformly in $x$. Overall, in view of \eqref{eq_**} (where it is shown that $ \sum_{i=1}^n \# \Delta_i = \Theta(N)$), this yields
\begin{eqnarray}  \label{eq_diff_xi}
\left|\xi_i - \frac{1}{\|f\|_2 \sqrt{\sum_{i=1}^n \#\Delta_i}} \sum_{k \in \Delta_i} f(n_k x) \right| = O \left(N^{-7/2} (\log N)^5 \right),
\end{eqnarray}
and
\begin{equation} 
\label{eq_in_part}
\begin{split}
 \left| \xi_i \right| & \le \left| \xi_i - \frac{1}{\|f\|_2 \sqrt{\sum_{i=1}^n \#\Delta_i}} \sum_{k \in \Delta_i} f(n_k x)  \right |  + \left|  \frac{1}{\|f\|_2 \sqrt{\sum_{i=1}^n \#\Delta_i}} \sum_{k \in \Delta_i} f(n_k x) \right |\\
 &= O \left( N^{-7/2} (\log N )^5 \right) + O \left( N^{-1/2} (\log N)^5 \right) \\
 & = O \left( N^{-1/2} (\log N)^5 \right).
\end{split}
\end{equation}
By \eqref{eq_number_blocks} and \eqref{eq_diff_xi}, we have 
\begin{equation} 
\label{eq_x_dist}
\left| X_n -  \frac{1}{\|f\|_2 \sqrt{\sum_{i=1}^n \#\Delta_i}} \sum_{i=1}^n \sum_{k \in \Delta_i} f(n_k x) \right| = O \left(N^{-5/2} \right).
\end{equation}\\

Thus, in the notation of Theorem A, using \eqref{eq_number_blocks}, \eqref{eq_**} and \eqref{eq_in_part}, we obtain
\begin{equation}
L_4^n = \E \left( \sum_{i=1}^n |\xi_i|^4 \right) = O \left( \sum_{i=1}^n  \frac{(\log N)^{20}}{N^2} \right) = O\left( \frac{(\log N)^{15}}{N} \right). \label{eq_L_4_est}
\end{equation}
The next (and more complicated) step is to estimate $N_4^n$. Using the inequality $|x^2-y^2| \leq |x-y| \cdot |x+y|$ as well as  \eqref{eq_diff_xi} and \eqref{eq_in_part}, we have
\begin{eqnarray*}
& & \left| \xi_i^2 - \left( \frac{1}{\|f\|_2 \sqrt{\sum_{i=1}^n \# \Delta_i}} \sum_{k \in \Delta_i} f(n_k x) \right)^2 \right|\\ 
& \leq & \left| \xi_i - \left( \frac{1}{\|f\|_2 \sqrt{\sum_{i=1}^n \# \Delta_i}}  \sum_{k \in \Delta_i} f(n_k x) \right) \right| \cdot \left| \xi_i + \left( \frac{1}{\|f\|_2 \sqrt{\sum_{i=1}^n \# \Delta_i}}  \sum_{k \in \Delta_i} f(n_k x) \right) \right| \\
& = & O  \left( N^{-4} (\log N)^{10} \right).
\end{eqnarray*}
Thus, by Minkowski's inequality,
\begin{eqnarray}
\sqrt{N_4^n} &  = & \left( \E \left( \sum_{i=1}^n \E \left(\xi_i^2 | \mathcal{F}_{i-1} \right) -1 \right)^2 \right)^{1/2} \nonumber\\ 
& = & \left( \E \left( \sum_{i=1}^n \E \left( \frac{1}{\|f\|_2^2 \sum_{i=1}^n \# \Delta_i} \left( \sum_{k \in \Delta_i} f(n_k x) \right)^2 \Big| \mathcal{F}_{i-1} \right) -1 \right)^2 \right)^{1/2} + O  \left( N^{-3} (\log N)^{15} \right). \label{eq_mink}
\end{eqnarray}
Now we note that
\begin{eqnarray*}
& & \left( \sum_{k \in \Delta_i} f(n_k x) \right)^2 \\
& = &  \left( \sum_{k \in \Delta_i} \sum_{j=1}^d c_j \cos(2 \pi j n_k x) \right)^2 \\
& = & \sum_{k,\ell \in \Delta_i} \sum_{j_1,j_2=1}^d c_{j_1} c_{j_2} \cos(2 \pi j_1 n_{k} x) \cos(2 \pi j_2 n_{\ell} x)  \\
& = &  \sum_{k,\ell \in \Delta_i} \sum_{j_1,j_2=1}^d \frac{c_{j_1} c_{j_2}}{2}  \left( \cos(2 \pi (j_1 n_{k} + j_2 n_{\ell}) x) + \cos(2 \pi (j_1 n_{k} - j_2 n_{\ell}) x) \right) \\
 & = &  \sum_{h \in \mathbb{Z}} \sum_{k,\ell \in \Delta_i} \sum_{j_1,j_2=1}^d \frac{c_{j_1} c_{j_2}}{2}  \left(\mathbb{1}(j_1 n_{k} + j_2 n_{\ell} = h) + \mathbb{1}(j_1 n_{k} - j_2 n_{\ell} = h)   \right) \cos(2 \pi h x) \\
& = & \#\Delta_i \|f\|_2^2 +  \sum_{h \in \mathbb{Z}} \underbrace{\sum_{k,\ell \in \Delta_i} \sum_{j_1,j_2=1}^d}_{(j_1,k) \neq (j_2,\ell)} \frac{c_{j_1} c_{j_2}}{2}  \left(\mathbb{1}(j_1 n_{k} + j_2 n_{\ell} = h) + \mathbb{1}(j_1 n_{k} - j_2 n_{\ell} = h)   \right) \cos(2 \pi h x),
\end{eqnarray*}
which shows how the two-term Diophantine equations come into play. If $|h| \geq \min_{m \in \Delta_i} n_m$, then, similar to \eqref{eq_small_int}, we have
\begin{equation} \label{int_small}
\big| \E \big( \cos(2 \pi h x) | \mathcal{F}_{i-1} \big) \big| = O \left(N^{-3} (\log N)^5 \right).
\end{equation}
Clearly, when $k,\ell \in \Delta_i$, then the sum $j_1 n_{k} + j_2 n_{\ell}$ always has a value exceeding $\min_{m \in \Delta_i} n_m$, but for $j_1 n_{k} - j_2 n_{\ell}$ there may be some configurations of $j_1,j_2,k,\ell$ for which $\left|j_1 n_{k} - j_2 n_{\ell}\right| \leq \min_{m \in \Delta_i} n_m$. 
Using \eqref{int_small} we have
\begin{align*}
& \left| \sum_{\substack{h \in \mathbb{Z} \\ |h| \ge \min_{m \in \Delta_i} n_m  }} \underbrace{\sum_{k,\ell \in \Delta_i} \sum_{j_1,j_2=1}^d}_{(j_1,k) \neq (j_2,\ell)} \frac{c_{j_1} c_{j_2}}{2}  \left(\mathbb{1}(j_1 n_{k} + j_2 n_{\ell} = h) + \mathbb{1}(j_1 n_{k} - j_2 n_{\ell} = h)   \right) \mathbb{E} \left[ \cos(2 \pi h x) | \mathcal{F}_{i-1} \right] \right| \\
& = O( \Delta_i^2) \times O(N^{-3} (\log N)^5 ) \\
& = O(N^{-3} (\log N)^{15}).
\end{align*}
Thus, 
\begin{eqnarray*}
 & & \E \left( \frac{1}{\|f\|_2^2} \left( \sum_{k \in \Delta_i} f(n_k x) \right)^2 \Big| \mathcal{F}_{i-1} \right) - \# \Delta_i \\
& = &  \E \left( \frac{1}{\|f\|_2^2} \sum_{\substack{h \in \mathbb{Z},\\ |h| \leq \min_{m \in \Delta_i} n_m}} \underbrace{\sum_{k,\ell \in \Delta_i} \sum_{j_1,j_2=1}^d}_{(j_1,k) \neq (j_2,\ell)} \frac{c_{j_1} c_{j_2}}{2} \mathbb{1}   (j_1 n_{k} - j_2 n_{\ell} = h)  \cos(2 \pi h x) \Bigg| \mathcal{F}_{i-1} \right) + O(N^{-3} (\log N)^{15}),
\end{eqnarray*}
which together with \eqref{eq_mink} yields
\begin{eqnarray*}
& & \sqrt{N_4^n} \\
&  =  &  \frac{1}{\|f\|_2^2 \sum_{i=1}^n \# \Delta_i}  \left(\E \left( \sum_{i=1}^n \E \left( \sum_{\substack{h \in \mathbb{Z},\\ |h| \leq \min_{m \in \Delta_i} n_m}}  \underbrace{\sum_{k,\ell \in \Delta_i} \sum_{j_1,j_2=1}^d}_{(j_1,k) \neq (j_2,\ell)}   \frac{c_{j_1} c_{j_2}}{2}   \mathbb{1}   (j_1 n_{k} - j_2 n_{\ell} = h)  \cos(2 \pi h x) \Bigg| \mathcal{F}_{i-1} \right) \right)^2 \right)^{1/2} \\
& & +  O(N^{-3} (\log N)^{15}).
\end{eqnarray*}

In the following, we will give an upper bound for
\begin{eqnarray}
& & \E \left( \sum_{i=1}^n \E \left( \sum_{\substack{h \in \mathbb{Z},\\ |h| \leq \min_{m \in \Delta_i} n_m}} \underbrace{\sum_{k,\ell \in \Delta_i} \sum_{j_1,j_2=1}^d}_{(j_1,k) \neq (j_2,\ell)}  \frac{c_{j_1} c_{j_2}}{2}   \mathbb{1}   (j_1 n_{k} - j_2 n_{\ell} = h)  \cos(2 \pi h x) \Bigg| \mathcal{F}_{i-1}  \right) \right)^2 \label{eq_jensen}.
\end{eqnarray}
For $1 \le i \le n$ and for $h \in \Z$, we define
\[
r_{h,i} := \mathbb{1}(|h| \le \min_{m \in \Delta_i} n_m)\underbrace{\sum_{k,\ell \in \Delta_i} \sum_{j_1,j_2=1}^d}_{(j_1,k) \neq (j_2,\ell)} \frac{c_{j_1} c_{j_2}}{2}   \mathbb{1}   (j_1 n_{k} - j_2 n_{\ell} = h)
\]
as well as
$$
r_h  =  \sum_{\substack{1 \leq i \leq n}} r_{h,i}.
$$
so that the expression in line \eqref{eq_jensen} becomes
$$
\E \left( \sum_{h,k \in \Z} \sum_{i,j=1}^n r_{h,i} r_{k,j} \mathbb{E}( \cos(2 \pi h) | \mathcal{F}_{i-1}) \E ( \cos(2 \pi k) | \mathcal{F}_{j-1}) \right).
$$
By the construction of $\mathcal{F}_{i-1}$, for $|h| \le \min_{m \in \Delta_i} n_m$, we have $|\cos(2 \pi h x) - \E ( \cos(2 \pi h x) | \mathcal{F}_{i-1}) | = O\left(N^{-3} \right)$, uniformly in $x$, and thus
\begin{align*}
& \E \left( \sum_{h,k \in \Z} \sum_{i,j=1}^n r_{h,i} r_{k,j} \mathbb{E}( \cos(2 \pi h x) | \mathcal{F}_{i-1}) \E ( \cos(2 \pi k x ) | \mathcal{F}_{j-1}) \right) \\
& \qquad  = \E \left( \sum_{h,k \in \Z} \sum_{i,j=1}^n r_{h,i} r_{k,j}  \cos(2 \pi h x)  \cos(2 \pi kx )  \right) + O(N^{-3}) \sum_{h,k \in \Z} \sum_{i,j=1}^n |r_{h,i}| |r_{k,j}| \\
& \qquad = r_0^2 + \sum_{h \neq 0} \frac{r_h^2}{2} + O(N^{-3}) \left( \sum_{h \in \Z} \sum_{i=1}^n |r_{h,i}| \right)^2.
\end{align*}
Note that for any fixed $j_1, j_2$, and for any $i$ and any fixed $k \in \Delta_i$, there can be at most two different values of $\ell \in \Delta_i$ for which $|j_1 n_{k} - j_2 n_{\ell}| \leq \min_{m \in \Delta_i} n_m$. This means that for the numbers $r_{h,i}$ defined above we have
$$
\sum_{h \in \mathbb{Z}} \sum_{i=1}^n |r_{h,i}| = O(N),
$$
which in particular implies that $\sum_{h \in \Z} |r_h| = O(N)$. The above estimate leaves us with 
\[
O \left(N^{-3} \right) \left( \sum_{h \in \Z} \sum_{i=1}^n |r_{h,i}| \right)^2 = O\left(N^{-1} \right).
\]

On  the other hand, by assumption \eqref{Eq_Cond_Dio_1} we know that for any $h \in \Z$ and any fixed $j_1, j_2 \leq d$ the number of solutions of $j_1 n_k - j_2 n_\ell = h$, subject to $k,\ell \le N$, is at most $O(N/g_N)$, uniformly in $h$ and thus
\begin{equation} \label{four-upper}
\sup_{h \in \Z}|r_{h}| = O(N/g_N).
\end{equation}
Consequently, the expression in line \eqref{eq_jensen} is bounded above by
\begin{equation} \label{four-upper-2}
 r_0^2 + \sum_{h \neq 0} \frac{r_h^2}{2} + O \left(N^{-1} \right) = O \left( \max_{h \in \mathbb{Z}} |r_h| \sum_{h \in \mathbb{Z}} |r_h| \right)+ O \left(N^{-1} \right)  = O \left(N^2 / g_N \right),
\end{equation}
where we used that $g_N \leq N$ by assumption. Combining all estimates, we arrive at
$$
\sqrt{N_4^n} = O \left(1 / \sqrt{g_N} \right) + O\left(N^{-3} (\log N)^{15} \right), 
$$
and thus, again using that $g_N \leq N$, by assumption, 
\begin{equation} \label{eq_N_4_est}
N_4^n = O (1/g_N).
\end{equation}
Note how \eqref{four-upper} and \eqref{four-upper-2} were derived using the Diophantine assumption from the statement of the theorem (and actually this is the only place where this assumption is used during the proof of Theorem \ref{thm_iid}). The role of the Diophantine assumption is essentially to ensure that the centered fourth moment of the lacunary sum is not too large, which is reflected in the upper bound \eqref{eq_N_4_est} for the centered fourth moment for the martingale.\\

We can now control the sizes of $L_4^n$ and $N_4^n$, which in view of Theorem A implies that the distribution of $X_n$, and thus also the normalized distribution of $\sum_{k \in \bigcup_{i=1}^n \Delta_i} f(n_k x)$, is close to the normal distribution. 
\color{black}
It remains to show that the contribution of the indices in the ``dashed'' blocks is indeed negligible, as claimed above.\\

We will use the fact that 
\begin{equation} \label{eq_exp}
e^x \leq 1 + x + x^2, \qquad \text{for all $x$ with } |x| \leq 1. 
\end{equation}
According to \eqref{eq_exp}, for all $\lambda$ with $|\lambda| \leq \frac{1}{\# \Delta_i'} = \frac{1}{6 \log_q N}$, writing $\exp(x) :=  e^x$, we have
\begin{eqnarray}
& & \int_0^1 \exp \left (\lambda \sum_{i=1}^n  \sum_{k \in \Delta_i'} \cos(2 \pi n_k x) \right) dx \nonumber\\
& = & \int_0^1 \prod_{i=1}^n \exp \left (\lambda  \sum_{k \in \Delta_i'} \cos(2 \pi n_k x) \right) dx \nonumber\\
&  \leq & \int_0^1 \prod_{i=1}^n \left( 1 + \lambda  \sum_{k \in \Delta_i'} \cos (2 \pi n_k x) + \left( \lambda  \sum_{k \in \Delta_i'}\cos (2 \pi n_k x) \right)^2 \right) dx. \label{eq_product}
\end{eqnarray}
We have
\begin{equation} \label{cos-square}
\left(\sum_{k \in \Delta_i'}\cos (2 \pi n_k x) \right)^2 = \frac{1}{2} \sum_{k,\ell \in \Delta_i'} \left( \cos (2 \pi (n_k+n_{\ell}) x) + \cos (2 \pi (n_k - n_{\ell} x))  \right).
\end{equation}
On the right-hand side of \eqref{cos-square} we have cosine-functions whose size of the frequency we control very efficiently. More precisely, $n_k+n_{\ell}$ is always in the range
$$
\left[ \min_{k \in \Delta_i'} n_k, ~2 \max_{k \in \Delta_i'} n_k  \right],
$$
and $n_k - n_\ell$ is (whenever it is non-zero, which happens exactly when $k = \ell$) of absolute value at least
$$
\left[ (q-1) \min_{k \in \Delta_i'} n_k, ~\max_{k \in \Delta_i'} n_k \right]
$$
as a consequence of the lacunary growth condition. Recall that the ``dashed'' blocks $\Delta_i'$ are separated by the long ``un-dashed'' blocks $\Delta_i$.  Thus the frequencies which we can encounter in \eqref{eq_product} and \eqref{cos-square} for some specific value of $i$ are separated by a large multiplicative factor from those which we could possibly encounter there for some other value of $i$. This is a classical argument in the theory of lacunary sums of dilated functions (worked out in detail for example in \cite[Lemma 2]{tak} and \cite[Lemma 3]{plt}), whose upshot is that in \eqref{eq_product} one can interchange integral and product signs (provided that $N$ is sufficiently large in comparison with the constant $q$), and that accordingly the expression in line \eqref{eq_product} equals
\begin{eqnarray*}
\prod_{i=1}^n \int_0^1 1 + \lambda \sum_{k \in \Delta_i'} \cos (2 \pi n_k x) + \left( \lambda  \sum_{k \in \Delta_i'}\cos (2 \pi n_k x) \right)^2 dx & = &  \prod_{i=1}^n \left( 1 + \frac{\lambda^2}{2}   \# \Delta_i'  \right) \\
& \leq & \exp \left( \frac{\lambda^2}{2} \sum_{i=1}^n \# \Delta_i' \right),
\end{eqnarray*}
where the last line follows from $1+x \leq e^x$. Overall this yields
$$
\int_0^1 \exp \left (\lambda \sum_{i=1}^n \sum_{k \in \Delta_i'} \cos(2 \pi n_k x) \right) dx \leq \exp \left( \frac{\lambda^2}{2} \sum_{i=1}^n \# \Delta_i' \right),
$$
for $|\lambda| \leq 1/(6 \log_q N)$. We set $\lambda = \frac{2 (\log N)^2}{\sqrt{N}}$. Then for sufficiently large $N$ we indeed have $|\lambda| \leq 1/(6 \log_q N)$, and we get
\begin{eqnarray*}
\mathbb{P} \left( x \in (0,1):~\sum_{i=1}^n \sum_{k \in \Delta_i'} \cos(2 \pi n_k x) > \frac{\sqrt{N}}{ \log N} \right) & \leq & \exp \left( \frac{\lambda^2}{2} \sum_{i=1}^n \# \Delta_i' - \frac{\lambda \sqrt{N}}{\log N} \right) \\
& \leq & \exp \left( 2\frac{(\log N)^4}{N} \frac{6 N \log_q N}{(\log N)^5} - 2 \log N \right) \\
& = & \exp \left(\frac{12}{\log q} - 2  \log N \right) \\
& = & O (1/N^2).
\end{eqnarray*}
The same argument can be carried out for $-\sum_{i=1}^n \sum_{k \in \Delta_i'} \cos(2 \pi n_k x) $ instead of $\sum_{i=1}^n \sum_{k \in \Delta_i'} \cos(2 \pi n_k x) $, so that we can insert absolute values and obtain
$$
\mathbb{P} \left( x \in (0,1)\,:\, \left|\sum_{i=1}^n \sum_{k \in \Delta_i'} \cos(2 \pi n_k x)  \right| > \frac{\sqrt{N}}{ \log N} \right) = O \left(1/N^2 \right).
$$
Thus, since $|f(x)| \leq \sum_{j=1}^d \left| c_j \cos(2 pi j x) \right|$,  overall we obtain
\begin{equation} \label{eq_***}
\left|\sum_{i=1}^n \sum_{k \in \Delta_i'} f(n_k x) \right| = O \left( \frac{\sqrt{N}}{\log N} \right), 
\end{equation}
except on a set of probability at most $O(1/N^2)$.\\

By Theorem A, for any fixed $\eta \in (0,1)$ we have 
\begin{equation} \label{eq_tha_conclusion}
\frac{\mathbb{P} (X_n \geq r)}{1 - \Phi(r)} \to 1,
\end{equation}
uniformly in $r$ subject to $0 \leq r \leq \sqrt{2 \eta \left| \log \left( L_4^n + N_4^n \right) \right|}$. In our setup, by \eqref{eq_L_4_est} and \eqref{eq_N_4_est} we have
$$
\left| \log \left( L_4^n + N_4^n \right) \right| \geq \log \left( \min \left( 1/L_4^n, 1/N_4^n \right) \right) \geq   \log \left( \min \left( N / (\log N)^{15}, g_N \right) \right) \geq \eta \log g_N, 
$$
since by assumption $g_N \leq N$ and thus  $\min \left( N / (\log N)^{15}, g_N \right) \geq g_N^\eta$. Thus, \eqref{eq_tha_conclusion} holds uniformly for $0 \leq r \leq  \sqrt{2 \eta^{3/2} \log g_N}$.\\

Recall that by \eqref{eq_x_dist}
$$
\left| X_n  - \frac{1}{\|f\|_2 \sqrt{\sum_{i=1}^n \#\Delta_i}} \sum_{i=1}^n \sum_{k \in \Delta_i} f(n_k x) \right| = O \left(N^{-5/2} \right). 
$$
Furthermore, we saw in \eqref{eq_**} that
$$
N - \sum_{i=1}^n \#\Delta_i = O \left(N / (\log N)^4 \right),
$$
and established in \eqref{eq_***} that 
$$
\left| \sum_{k \in \Delta_i'} f(n_k x) \right| = O \left( \frac{\sqrt{N}}{\log N} \right), 
$$
except on a set of probability at most $O(1/N^2)$. Note also  that we consider only $r$ in the range $0 \leq r \leq \sqrt{2 \eta^{3/2} \log g_N}$, where $g_N \leq N$ by assumption, so that 
$$
1 - \Phi(r) \geq 1/N
$$
throughout the relevant range, for $N\in\mathbb N$ sufficiently large. All these facts together allow the application of a simple approximation argument, which permits us to replace first $X_n$ by 
\[
  \frac{1}{\|f\|_2 \sqrt{\sum_{i=1}^n \#\Delta_i}} \sum_{i=1}^n \sum_{k \in \Delta_i} f(n_k x),
\]
then replace the normalization factor $\sqrt{\sum_{i=1}^n \# \Delta_i}$ by $\sqrt{N}$, and finally replace $\sum_{i=1}^n \sum_{k \in \Delta_i} f(n_k x)$ by $\sum_{k=1}^N f(n_k x)$ by inserting also the contribution of the ``dashed'' blocks. After this procedure (whose details we omit) we arrive at
\begin{equation*}
\frac{\mathbb{P} \left( x \in (0,1): \frac{1}{\|f\|_2 \sqrt{N}} \sum_{k=1}^N  f(n_k x) \geq r \right)}{1 - \Phi(r)} \to 1,
\end{equation*}
uniformly for $0 \leq r \leq  \sqrt{2 \eta^{3/2} \log g_N}$. For given $\varepsilon >0$, we can choose $\eta>0$ so close to 1 that  $\eta^{3/4} \geq (1-\varepsilon)$, such that the desired result holds throughout the range $0 \leq r \leq  (1-\varepsilon) \sqrt{2 \log g_N}$. This concludes the proof of Theorem \ref{thm_iid}.

\section{Proof of Theorem \ref{thm_not_iid}}

Assume that we are given a sequence $(g_N)_{N \in \mathbb{N}}$ as in the statement of Theorem \ref{thm_not_iid}, and a number $\varepsilon>0$. We will construct a lacunary sequence $(n_k)_{k \geq 1}$ and a trigonometric polynomial $f$ such that the number of solutions of the relevant Diophantine equations satisfies the upper bound  $L(N,a,b)= O\left( \frac{N}{g_{N}} \right)$, but such that the size of the tails of the distribution of the normalized lacunary sum $S_N$ at deviation level $(1+\varepsilon) \|f\|_2 \sqrt{2 \log g_N}$ fails to coincide with the corresponding quantity for the standard normal distribution, for infinitely many $N$. 

\subsection{Construction of the sequence}

Generally speaking, the construction of the sequence for the proof of Theorem \ref{thm_not_iid} is in the spirit of a construction used in \cite{AFP_Lil}, but several modifications are necessary to account for the presence of the sequence $(g_N)_{N \in \mathbb{N}}$ in the statement of Theorem \ref{thm_not_iid} (which in the setup of \cite{AFP_Lil} was fixed to be $g_N = (\log N)^{1-\varepsilon},~N \geq 1$, thereby allowing a much more rigid construction of the sequence). We start by defining
\[
\Delta_i:= \left\{ 2^{3^i-1}, \ldots, 2^{3^{(i+1)}-1} -1\right\}, \quad i \in \mathbb{N}_{0}.
\]
We then set $M(i):= \lceil g_{ \# \Delta_i } \rceil$ and subdivide $\Delta_i$ into shorter blocks $\Delta_i^{(m)}$, for $m=1, \ldots, M(i)$. This can be done in a way such that all $\Delta_i^{(m)}$ have, up to an error of $1$, the same cardinality which essentially equals $ \frac{ \# \Delta_i}{M(i)} $. By construction, all these sets together form a partition of $\N$, i.e.
\[
\N = \bigcupdot_{i \in \N_0} \bigcupdot_{m=1}^{ M(i)} \Delta_i^{(m)}.
\]
We now define our sequence $(n_k)_{k \in \N}$ as
\[
n_k := 2^{2^{2^{2^{i}}}} \left( 2^{k+im} +m \right) , \quad k \in \Delta_i^{(m)} .
\]
We briefly describe the heuristics behind this construction. The fourfold exponential term $2^{2^{2^{2^{i}}}}$ here is somewhat arbitrary; its main purpose is to ensure that there is a massive jump in the size of the sequence elements $n_k$ when $k$ moves from $\Delta_i$ to $\Delta_{i+1}$. Similarly, the ``$+im$'' component in the term $2^{k+im}$ leads to a jump in the size of $n_k$ when $k$ moves from some $\Delta_i^{(m)}$ to $\Delta_i^{(m+1)}$. These effects are used to establish stochastic independence between sums $\sum_{k \in \Delta_i^{(m)}} f(n_k x)$ for different values of $m$ or of $i$. Finally, the additive ``$+m$'' term in the definition of the sequence ensures that there are many (but not too many) solutions of Diophantine equations such as
\begin{equation} \label{many_sol}
n_{k+1} - 2 n_k = c, \qquad k \in \Delta_i^{(m)}
\end{equation}
for suitable values of $c$ (namely integer multiples of $m$ times the common factor $2^{2^{2^{2^{i}}}}$). The construction is made in such a way that the set of those values of $c$ on the right-hand side of \eqref{many_sol} for which we find many solutions of the Diophantine equations change as $m$ and $i$ changes. Thus, the total number of solutions of the Diophantine equation is bounded above, independent of the actual value of the number $c$ on the right-hand side, by the relative proportion of indices contained in a sub-block $\Delta_i^{(m)}$, compared to the overall number of indices. By construction $\# \Delta_1 + \dots + \# \Delta_i \approx \# \Delta_i$, and writing $N$ for $\# \Delta_1 + \dots + \# \Delta_i$, the number of solutions of the relevant Diophantine equations is thus bounded above by $\# \Delta_i^{(m)} \approx \# \Delta_i / M(i) \approx N / g_N$, as desired.\\

To begin with, we prove that the sequence we just  constructed is indeed a lacunary sequence.

\begin{lem}
\label{lem:seq_is_lacunary}
For the integer sequence $(n_k)_{k \in \N}$ constructed above, we have
\[
\liminf_{k \rightarrow \infty} \frac{n_{k+1}}{n_k} = 2.
\]
\end{lem}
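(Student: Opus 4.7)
The plan is to split the analysis of consecutive ratios $n_{k+1}/n_k$ into three regimes based on the block structure: \textbf{(A)} $k$ and $k+1$ lie in the same sub-block $\Delta_i^{(m)}$; \textbf{(B)} $k = \max \Delta_i^{(m)}$ and $k+1 = \min \Delta_i^{(m+1)}$; and \textbf{(C)} $k = \max \Delta_i$ and $k+1 = \min \Delta_{i+1}$. Case (A), which covers all but a vanishing fraction of indices, will be the source of the value $2$ in the liminf, while (B) and (C) produce much larger ratios.

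In Case (A), the prefactor $2^{2^{2^{2^i}}}$ and the additive shift $m$ cancel, leaving
\[
\frac{n_{k+1}}{n_k} \;=\; \frac{2^{k+1+im}+m}{2^{k+im}+m} \;=\; 2 \,-\, \frac{m}{2^{k+im}+m},
\]
so every such ratio sits strictly below $2$. I would bound the correction using the crude inequalities $m \le M(i) \le g_{\#\Delta_i}+1 \le \#\Delta_i + 1 \le 2^{3^{i+1}}$ (the middle step uses the standing assumption $g_N \le N$) together with $k \ge 2^{3^i - 1}$; this gives $m/(2^{k+im}+m) \le 2^{3^{i+1}}/2^{2^{3^i-1}}$, which decays doubly-exponentially in $i$. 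Letting $k \to \infty$ through Case (A) indices therefore yields ratios approaching $2$ from below, so $\liminf_{k\to\infty} n_{k+1}/n_k \le 2$.

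For the matching lower bound I would simply observe that in Cases (B) and (C) a single exponent jump overwhelms all additive corrections: in (B) the exponent moves from $k+im$ to $k+1+i(m+1)$, giving a ratio asymptotic to $2^{i+1} \ge 4$ for $i \ge 1$, while in (C) the prefactor alone multiplies by $2^{\,2^{2^{2^{i+1}}} - 2^{2^{2^{i}}}}$, which is astronomical. Combining with the Case (A) estimate, for every $\eta>0$ every sufficiently large $k$ satisfies $n_{k+1}/n_k \ge 2 - \eta$, giving $\liminf \ge 2$ and hence equality. The only step requiring any real care is the quantitative decay in Case (A), and this is exactly what the fourfold tower $2^{2^{2^{2^i}}}$ combined with the block schedule $\Delta_i = \{2^{3^i-1}, \ldots, 2^{3^{i+1}-1}-1\}$ was designed to guarantee, irrespective of the growth rate of $g_N$.
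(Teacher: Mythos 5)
Your proposal is correct and follows essentially the same three-case decomposition as the paper's own proof: same sub-block, consecutive sub-blocks within one $\Delta_i$, and the jump from $\Delta_i$ to $\Delta_{i+1}$. Your Case (A) computation $n_{k+1}/n_k = 2 - m/(2^{k+im}+m)$ matches the paper's $2\,\frac{1+2^{-k-im-1}m}{1+2^{-k-im}m}$, and the bound $m \le M(i) \le \#\Delta_i \le 2^{3^{i+1}}$ combined with $k \ge 2^{3^i-1}$ gives exactly the decay needed. One small remark on Case (C): you assert that the prefactor jump ``alone'' is astronomical, but the base factor $\frac{2^{k+i+2}+1}{2^{k+iM(i)}+M(i)}$ can actually shrink by roughly $2^{-iM(i)}$ when $M(i)$ is large; the paper explicitly bounds $M(i) \le 2^{3^{i+1}}$ and compares $2^{2^{2^{2^{i+1}}}-2^{2^{2^{i}}}}$ against $2^{i\,2^{3^{i+1}}}$ to see the prefactor still wins. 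You do have the needed bound on $M(i)$ from your Case (A) discussion, so this is a matter of making the comparison explicit rather than a missing idea.
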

\begin{proof}
If $k, k+1 \in \Delta_i^{(m)}$ for some values of $m$ and $i$, then we have
\begin{equation} \label{case_studied}
\frac{n_{k+1}}{ n_k} = \frac{2^{k+1+im}+m}{2^{k+im}+m} = 2 \frac{1+ 2^{-k-im-1} m}{1+ 2^{-k-im} m} \asymp 2,
\end{equation}
for sufficiently large $k$ (i.e., for sufficiently large $i$, since $i$ and $k$ increase together).\\

Now assume that $k$ is the last element in $ \Delta_i^{(m)}$ for some pair $(i,m)$ and $k+1$ is the first element in $\Delta_i^{(m+1)}$. Then we have
\begin{equation} \label{middle}
\frac{n_{k+1}}{n_k} = \frac{2^{k+1+i(m+1)}+m+1}{2^{k+im} + m} \geq 2^{i+1}  \frac{1+ 2^{-k-im-1} m}{1+ 2^{-k-im} m} \asymp 2^{i+1}
\end{equation}
for sufficiently large $k$.\\

Finally, assume that $k$ is the last element in $ \Delta_i^{(M(i))}$ for some pair $(i,M(i))$ and $k+1$ is the first element in $\Delta_{i+1}^{(1)}$. Then we have
\begin{equation} \label{cont_here}
\frac{n_{k+1}}{n_k} = \frac{2^{2^{2^{2^{i+1}}}}}{2^{2^{2^{2^i}}}} \frac{2^{k+1+i + 1}+1}{2^{k+i M(i)} + M(i)} \geq \frac{2^{2^{2^{2^{i+1}}}}}{2^{2^{2^{2^i}}}} \frac{2^{k+1+i + 1}}{2^{k+i M(i)+1}} \geq \frac{2^{2^{2^{2^{i+1}}}}}{2^{2^{2^{2^i}}} 2^{i M(i)}}.
\end{equation}
Since $g_N \leq N$ for all $N$ by assumption, we have $M(i) =  \lceil g_{ \# \Delta_i } \rceil \leq \# \Delta_i \leq 2^{3^{i+1}}$. Thus in the present case, continuing from \eqref{cont_here}, we have
$$
\frac{n_{k+1}}{n_k} \geq \frac{2^{2^{2^{2^{i+1}}}}}{2^{2^{2^{2^i}}} 2^{i 2^{3^{i+1}}}},
$$
and one can easily check that this diverges to infinity as $i$ increases. In conclusion, asymptotically the smallest quotients of $n_{k+1}/n_k$ come from the case studied in \eqref{case_studied}, and we have 
$$
\liminf_{k \to \infty} \frac{n_{k+1}}{n_k} = 2
$$
as claimed.
\end{proof}

\begin{rem}
\label{rem:big_gaps}
During the previous proof, we have seen that the \glqq{}gaps\grqq{} (in terms of the size of $(n_k)_{k \in \N}$) between two elements of our sequence with consecutive indices in $\Delta_i^{(m)}$ and in $\Delta_i^{(m+1)}$, or in $\Delta_i^{(M(i))}$ and in $\Delta_{i+1}^{(1)}$, become arbitrarily large as $i$ becomes large. As we will see below, this is the property we need for the lacunary partial sums over indices in different blocks to behave stochastically independent of each other.
\end{rem}

\begin{lem}
\label{lem:independent_blocks}
Let $(n_k)_{k \in \N}$ be a lacunary sequence with growth factor $\eta > 1$. Let $(\Lambda_r)_{r \in \N}$ be a sequence of finite subsets of $\N$ with the following property: For all $M > 0$, there exists an $r_0 \in \N$ such that for all $r \geq r_0$, we have
\[
\frac{\min_{k \in \Lambda_{r+1}} n_k }{\max_{k \in \Lambda_{r}} n_k} \geq M.
\]
Then, for all $a,b \in \N$, it holds that
\[
\sup_{c \in \Z} \sum_{\substack{r,s =1 \\ r \neq s}}^{\infty} \# \big\{ k \in \Lambda_r, \ell \in \Lambda_s \,:\, a n_k - b n_{\ell} = c \big\} = O(1),
\]
with the implicit constant in the $O$-term only depending on $a,b,r_0,$ and $\eta$.
\end{lem}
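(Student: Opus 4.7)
The strategy is to fix $M$ sufficiently large in terms of $a, b, \eta$, apply the hypothesis to obtain the corresponding threshold $r_0$, and then split the double sum into an ``above threshold'' regime, where $\min(r, s) \geq r_0$, and a ``below threshold'' regime, where $\min(r, s) < r_0$. The above-threshold regime is the substantive one; the below-threshold regime collapses into a finite count that can be absorbed into an $r_0$-dependent constant.

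In the above-threshold regime, by symmetry (interchanging $(a, b, c)$ and $(b, a, -c)$) it suffices to handle $r < s$. Here the gap condition forces $n_\ell \geq M n_k$, so the equation $a n_k - b n_\ell = c$ rewrites as $|b n_\ell + c| = a n_k \leq (a/M) n_\ell$. For $M > a/b$ this rules out $c \geq 0$, while for $c < 0$ it pins $n_\ell$ to the interval $[|c|/(b + a/M),\, |c|/(b - a/M)]$, whose endpoint ratio tends to $1$ as $M \to \infty$. I would choose $M$ large enough that this ratio is strictly less than $\eta$; then the lacunarity of $(n_k)_{k \in \N}$ admits at most one index $\ell$ with $n_\ell$ in this interval, and the relation $n_k = (b n_\ell + c)/a$ then pins down $k$ uniquely. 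Crucially, this uniqueness holds across all admissible pairs $(r, s)$ simultaneously, not just for a fixed pair, because the forced interval for $n_\ell$ depends only on $c$. The contribution from $r < s$ above threshold is therefore at most $1$, and by the symmetric argument the contribution from $r > s$ above threshold is also at most $1$.

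In the below-threshold regime, $\min(r, s) < r_0$ means that at least one of $k, \ell$ lies in the finite set $\bigcup_{r < r_0} \Lambda_r$. Since $(n_k)$ is strictly increasing, fixing such an element together with $c$ determines its partner uniquely (when it exists at all). Summing over both cases ($r < r_0$ or $s < r_0$) yields at most $2 \sum_{r < r_0} \#\Lambda_r$ solutions, which is a constant depending on $r_0$ (and hence, through the hypothesis, on the quantities $a, b, \eta$ that went into fixing $M$).

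The main obstacle is the coupled choice of $M$: it must be large enough both to rule out the ``wrong sign'' of $c$ in the above-threshold regime and to squeeze the interval confining $n_\ell$ below the multiplicative width $\eta$ permitted by lacunarity. Once this single quantitative threshold is identified and $r_0$ is read off from the hypothesis, everything else is routine bookkeeping, and the total count is bounded by $2 + 2 \sum_{r < r_0} \#\Lambda_r = O(1)$, uniformly in $c$.
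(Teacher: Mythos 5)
Your argument is correct and is essentially the same as the paper's: both fix $M$ large in terms of $a, b, \eta$, confine the larger of the two frequencies to an interval of multiplicative width less than $\eta$ (the paper pins down $n_k$ with $r > s$, you pin down $n_\ell$ with $r < s$), and invoke lacunarity plus the injectivity of $k \mapsto n_k$ to conclude uniqueness above threshold, absorbing the finitely many below-threshold solutions into the constant. Your interval endpoints are marginally looser (lower end $|c|/(b+a/M)$ rather than $|c|/b$), but the endpoint ratio still tends to $1$ as $M \to \infty$, so nothing is lost.
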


\begin{proof}
We can assume that $r > s$, since the other case works entirely symmetric. Let $M = \frac{2\eta}{\eta-1}  > 0$ and let $r_0$ be the index with 
\[
\frac{\min_{k \in \Lambda_{r+1}} n_k }{\max_{k \in \Lambda_{r}} n_k} \geq \frac{b M}{a}
\]
for all $r \geq r_0$. In the following, we only consider the case $r \geq r_0$, since this excludes only finitely many Diophantine solutions. Let $k \in \Lambda_r$ and $\ell \in \Lambda_s$, then by the choice of $M$ the equality
\[
a n_k - bn_\ell = c
\]
can only hold for non-negative $c$. Moreover, we get
\[
\frac{an_k}{bn_\ell} \geq M
\]
and hence
\[
a n_k \left( 1 - \frac{1}{M} \right) = a n_k - \frac{a n_k}{M} \leq a n_k - b n_{\ell} \leq a n_k.
\]
This means any solution to the equation $a n_k - bn_\ell = c$ requires that
\begin{equation}
\label{eq:nk_unique}
n_k \in \left [ \frac{c}{a}, \left( 1- \frac{1}{M} \right)^{-1} \frac{c}{a} \right ].
\end{equation}

We observe that by the choice of $M$, for fixed $c \in \N$, there is at most one $r \geq r_0$ and at most one $k \in \Lambda_r$ such that \eqref{eq:nk_unique} holds. This is because if 
\[
\frac{c}{a} \leq n_k \leq \left( 1- \frac{1}{M} \right)^{-1} \frac{c}{a},
\]
then $n_{k+1} \geq \eta \frac{c}{a} > \left( 1- \frac{1}{M} \right)^{-1} \frac{c}{a}$, by the choice of $M$. Vice versa, $n_{k-1} \leq \frac{1}{\eta} n_k \leq \frac{1}{\eta} \left( 1- \frac{1}{M} \right)^{-1} \frac{c}{a} < \frac{c}{a}$. Now for fixed $k$ and $c$, there is of course at most one possible value of $\ell$ such that $a n_k - b n_\ell = c$. This shows that
\[
\sup_{c \in \N} \sum_{r = r_0}^{\infty} \sum_{s=1}^{r-1} \big\{ k \in \Lambda_r, \ell \in \Lambda_s \,:\, a n_k - b n_\ell = c \big\} = O(1).
\]
\end{proof}

\begin{lem}
\label{lem:Count_Dio}
The integer sequence $(n_k)_{k \in \N}$ constructed in this section satisfies the Diophantine condition in Equation \eqref{Eq_Cond_Dio} of Theorem \ref{thm_not_iid} for all $N\in\mathbb N$ of the form $N=\max \Delta_i, ~i \in \N$.
\end{lem}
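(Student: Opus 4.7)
The plan is to fix $a, b \in \N$ and $c \in \Z$, count solutions $(k,\ell) \in [1,N]^2$ of $a n_k - b n_\ell = c$, and decompose the count according to the sub-blocks $\Delta_i^{(m)}$ that contain $k$ and $\ell$. I would begin with a scale check: by the super-exponential growth of the block sizes $\#\Delta_i = 2^{3^{i+1}-1} - 2^{3^i-1}$, we have $N = \max \Delta_I \asymp \# \Delta_I$, and each sub-block $\Delta_i^{(m)}$ for $i \leq I$ has cardinality of order $\#\Delta_i / M(i) = \#\Delta_i / \lceil g_{\#\Delta_i}\rceil$, which by monotonicity of $(g_N)_{N \in \N}$ is $O(N/g_N)$. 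So the target bound matches the maximum single-sub-block size.

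For the cross-sub-block portion — pairs with $k \in \Delta_{i_1}^{(m_1)}$, $\ell \in \Delta_{i_2}^{(m_2)}$ and $(i_1,m_1) \neq (i_2,m_2)$ — I would apply Lemma \ref{lem:independent_blocks} with $(\Lambda_r)_{r \geq 1}$ enumerating the sub-blocks in their natural order. The growing-ratio hypothesis of that lemma is exactly the observation recorded in Remark \ref{rem:big_gaps}, so this contribution is $O(1)$, uniformly in $c$.

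The heart of the argument is the within-sub-block count. Substituting $n_k = 2^{2^{2^{2^i}}}(2^{k+im}+m)$, for $k, \ell \in \Delta_i^{(m)}$ the equation becomes
\[
a \cdot 2^{k+im} - b \cdot 2^{\ell+im} = c'_{i,m} := \frac{c}{2^{2^{2^{2^i}}}} - (a-b)m,
\]
which requires $2^{2^{2^{2^i}}} \mid c$. In the degenerate case $c'_{i,m} = 0$, the equation reduces to $a \cdot 2^{k+im} = b \cdot 2^{\ell+im}$, forcing $b/a$ to be an integer power of $2$ and $k-\ell$ to equal the corresponding exponent; this contributes at most $\#\Delta_i^{(m)}$ solutions. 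Crucially, the identity $c = (a-b)m \cdot 2^{2^{2^{2^i}}}$ pins down the pair $(i,m)$ uniquely (the tower-exponential factor $2^{2^{2^{2^i}}}$ is determined by the $2$-adic valuation of $c$, and $m$ is then determined by the quotient), so at most one sub-block falls into this case, contributing $O(N/g_N)$. In the non-degenerate case $c'_{i,m} \neq 0$, an $S$-unit-style argument — factor out $2^{\min(u,v)}$ with $u = k+im$, $v = \ell+im$, then read off the $2$-adic valuation and the odd part of the remaining factor — shows that the equation has only $O(1)$ integer solutions.

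The main obstacle I anticipate is bounding the aggregate contribution of the non-degenerate case across sub-blocks, since a naive $O(1)$-per-sub-block estimate could sum to much more than $N/g_N$ if many sub-blocks contributed. To control this, I would use that for fixed $c$, the divisibility $2^{2^{2^{2^i}}} \mid c$ restricts $i$ to finitely many values (bounded in terms of $v_2(c)$, hence $O(1)$ relative to $N$), and that within each such $i$ the (essentially unique) integer solution $(u,v)$ of $a 2^u - b 2^v = c'_{i,m}$ must land in the pairwise disjoint shifted intervals $im + \Delta_i^{(m)}$; comparing the forced $2$-adic valuation of the solution with the location of these intervals forces $m$ into $O(1)$ values as well. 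Summing the cross-sub-block, degenerate within-sub-block, and non-degenerate within-sub-block contributions gives $L(N,a,b) = O(N/g_N)$, which is the required bound.
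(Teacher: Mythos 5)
Your decomposition into cross-sub-block, degenerate within-sub-block, and non-degenerate within-sub-block contributions mirrors the paper's proof, and your treatment of the first two pieces (Lemma \ref{lem:independent_blocks} for the cross terms, the uniqueness of $(i,m)$ from the factorization $c=(a-b)m\,2^{2^{2^{2^i}}}$ for the degenerate piece) is essentially the paper's. You also unify the paper's case split on whether $a/b$ is a power of $2$, which is a small simplification.

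The gap is in your bound on how many sub-blocks contribute in the non-degenerate case. You claim that the divisibility $2^{2^{2^{2^i}}}\mid c$ "restricts $i$ to finitely many values (bounded in terms of $v_2(c)$, hence $O(1)$ relative to $N$)." The parenthetical does not follow: $L(N,a,b)$ is a supremum over $c$, and $c$ is allowed to grow with $N$. If $c$ is (say) an appropriate power of $2$ of magnitude comparable to $n_N$, then $v_2(c)$ can be of order $2^{2^{2^I}}$, and the divisibility $2^{2^{2^{2^i}}}\mid c$ then holds for every $i\le I$, i.e.\ for $\Theta(I)=\Theta(\log\log N_I)$ values of $i$. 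Summing your $O(1)$-per-$i$ bound over these gives $O(\log\log N_I)$, which is not $O(N_I/g_{N_I})$ in general (take $g_N=N/\log\log\log N$, which satisfies all the hypotheses of Theorem \ref{thm_not_iid} and yields $N/g_N=\log\log\log N=o(\log\log N)$). The paper closes exactly this gap differently: the estimates \eqref{this_says_1}--\eqref{this_says_3}, together with the gap conditions \eqref{middle}--\eqref{cont_here}, show that the set of values $c\neq 0$ attainable as $an_k-bn_\ell$ with $k,\ell\in\Delta_i^{(m)}$, $k+r\neq\ell$, lies in an interval of magnitudes that is \emph{disjoint} from the corresponding interval of every other sub-block. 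Hence for a fixed $c\neq 0$ at most one $(i,m)$ contributes in the non-degenerate case, giving $O(1)$ total. The repair for your argument is to replace the divisibility criterion by this magnitude-disjointness criterion (or to carry the "$\min(u,v)$ must lie in $im+\Delta_i^{(m)}$" constraint across $i$ as well as across $m$, which amounts to the same thing); the divisibility observation alone is too weak to give a bound uniform in $c$.
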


\begin{proof}
Let $(N_i)_{i \in \N}$ be the sequence with $N_i = \max \Delta_i$ for $i \in \N$. Then, for all $a,b \in \N$, we need to show that
\[
L(N_I,a,b) = \sup_{c \in \Z} \sum_{k,\ell =1}^{N_I} \mathbb{1}_{[a n_k - b_{\ell} = c]} = O \left( \frac{N_I}{g_{N_I}} \right), \quad I \in \N.
\]
Let $(\Lambda_r)_{r \in \N}$ be the sequence of blocks $\Delta_1^{(1)}, \Delta_1^{(2)}, \ldots, \Delta_1^{(M(1))}, \Delta_2^{(1)}, \Delta_2^{(2)}, \ldots, \Delta_2^{(M(2))}, \ldots $. Note that, by Remark \ref{rem:big_gaps}, the assumptions of Lemma \ref{lem:independent_blocks} are satisfied. This means, we have
\[
L(N_I,a,b) \leq \sup_{c \in \Z } \sum_{i=1}^I \sum_{m=1}^{M(i)} \# \left\{ k, \ell \in \Delta_i^{(m)} \,:\, a n_k - b n_\ell = c \right\} + O(1).
\]
Now we give an upper bound for
\[
\sup_{c \in \Z } \sum_{i=1}^I \sum_{m=1}^{M(i)} \# \left\{ k, \ell \in \Delta_i^{(m)} \,:\, a n_k - b n_\ell = c \right\}.
\]
Let us first assume that $\frac{a}{b}$ is not a power of $2$, then analogously to the argument in \cite[p.555--556]{AFP_Lil}, it follows that the quantity above is bounded by an absolute constant. In \cite{AFP_Lil} the sequence $(n_k)_{k \geq 1}$ was defined in similar fashion as in the present paper, but with $n_{k} = 2^{2^{i^4}} \left(2^k+m \right)$ instead of $n_k = 2^{2^{2^{2^{i}}}} \left( 2^{k+im} +m \right)$  for $k \in \Delta_i^{(m)}$; however, this does not affect the argument in any significant way. The only relevant ingredient for this part of the argument is that $n_{k+1}/n_k$ approaches either $2$ or $+\infty$ as $k$ increases, which is exactly what we have established in \eqref{case_studied}--\eqref{cont_here}; accordingly, since by assumption $\frac{a}{b}$ is not a power of $2$, the sequence formed as the set-theoretic union of $(a n_k)_{k \geq 1}$ and $(b n_k)_{k \geq 1}$ (sorted in increasing order) is also a lacunary sequence, and one can apply a general result on the maximal number of representations of integers as a difference of elements of a lacunary sequence (namely the result which appears for example on p.\ 203 of Zygmund's book \cite{zyg_book}). The same argument applies also to the case when $a=b$.\\

Let us thus assume $ \frac{a}{b} = 2^r $ for some $r \in \Z$, $r \neq 0$. We can assume in the sequel that $r \geq 1$ (since for $r \leq -1$ we can simply switch the roles of $k$ and $\ell$). Then for any $k, \ell \in \Delta_i^{(m)}$ we have $a n_k - b n_\ell = c$ when
\begin{equation} \label{assuming_this}
2^{k+r} - 2^\ell = \frac{c }{2^{2^{2^{2^{i}}}} b 2^{mi}} - \frac{(2^r-1)m}{2^{im}}.
\end{equation}
In the particular case of $c=0$, for $k, \ell \in \Delta_i^{(m)}$ we have $a n_k - b n_\ell = 0$ when
\begin{equation} \label{imposs}
2^{k+r} - 2^\ell = - \frac{(2^r-1)m}{2^{im}};
\end{equation}
it is clear that this is impossible when $i$ is sufficiently large, since the left-hand side is always an integer while the right-hand side goes to zero as $i \to \infty$, uniformly in $m$). Thus it remains to study the case when $c \neq 0$. In this case, when $k, \ell \in \Delta_i^{(m)}$ and $a n_k - b n_\ell = c$, then this implies
\begin{equation} \label{this_says_1}
|c| \leq \max (a,b) \max_{k \in \Delta_i^{(m)}} n_k.
\end{equation}
Furthermore, when $k+r > \ell$, then $a n_k - b n_\ell = c$ implies
\begin{eqnarray}
c  & = & 2^{2^{2^{2^{i}}}} \left(2^r b \left( 2^{k + im} + m \right) - b \left( 2^{\ell + im} + m \right) \right) \nonumber\\
& \geq & 2^{2^{2^{2^{i}}}} \left(b \left( 2^{\ell +1 + im} + 2^r m \right) - b \left( 2^{\ell + im} + m \right) \right) \nonumber\\
& \geq & 2^{2^{2^{2^{i}}}} b \left( 2^{\ell + im} + m \right) \nonumber\\
& \geq & b \min_{\ell \in \Delta_i^{(m)}} n_\ell \nonumber\\
& \geq & \min_{\ell \in \Delta_i^{(m)}} n_\ell. \label{this_says_2}
\end{eqnarray}
Similarly, when $k+r < \ell$, then $a n_k - b n_\ell = c$ implies
\begin{eqnarray}
c  & = & 2^{2^{2^{2^{i}}}} \left(2^r b \left( 2^{k + im} + m \right) - b \left( 2^{\ell + im} + m \right) \right) \nonumber\\
& \leq & 2^{2^{2^{2^{i}}}} \left(b \left( 2^{k+ r + im} + 2^r m \right) - b \left( 2^{k+r+1 + im} + m \right) \right) \nonumber\\
& \leq & - 2^{2^{2^{2^{i}}}} b \left( 2^{k + r + im} - \left(2^r - 1 \right) m \right) \nonumber\\
& \leq &  -b 2^r \left( \min_{k \in \Delta_i^{(m)}} n_k  - \left(2^{r+1} - 1 \right) m \right) \nonumber\\
& \leq & - \min_{k \in \Delta_i^{(m)}} n_k \label{this_says_3}
\end{eqnarray}
for sufficiently large $i$ (since $r \geq 1$ is fixed, and $m$ is asymptotically much smaller than $n_k$ for $n_k \in \Delta_i^{(m)}$). What \eqref{this_says_1}, \eqref{this_says_2}, \eqref{this_says_3} say, together with \eqref{middle} and \eqref{cont_here}, is the following: for fixed $a$ and $b$ with $\frac{a}{b} = 2^r$, for all $i_1,i_2,m_1,m_2$ with $i_1$ and $i_2$ sufficiently large, when we asssume that $(i_1,m_1) \neq (i_2,m_2)$, then the two sets
$$
\left\{c \in \mathbb{Z}\,:\,\exists k,\ell \in \Delta_{i_1}^{(m_1)},k+r \neq \ell:~a n_k - b n_\ell = c \right\} \quad \text{and} \quad \left\{c \in \mathbb{Z}\,:\,\exists k,\ell \in \Delta_{i_2}^{(m_2)},k+r \neq \ell:~a n_{k} - b n_\ell = c \right\}
$$
are disjoint.\\

Now we come back to assuming that for some $k, \ell \in \Delta_i^{(m)}$ we have $a n_k - b n_\ell = c$, so that \eqref{imposs} is satisfied. If $k + r > \ell$, then by the uniqueness of the $2$-adic expansion, there is at most one pair $(k,\ell)$ for which \eqref{imposs} can hold, for any given configuration of $a,b,c,m,i$. The same applies in the case $k + r < \ell$. Thus for any given $a,b,c,m,i$ we have 
$$
\sup_{c \in \Z } \# \left\{ k, \ell \in \Delta_i^{(m)}, k + r \neq \ell \,:\, a n_k - b n_\ell = c \right\} = O(1).
$$
However, by what was said in the paragraph after equation \eqref{this_says_3}, as a consequence of \eqref{middle}--\eqref{cont_here} and \eqref{this_says_1}--\eqref{this_says_3} this remains true when we take a summation over $i$ and $m$, so that overall we have
\[
\sup_{c \in \Z } \sum_{i=1}^I \sum_{m=1}^{M(i)} \# \left\{ k, \ell \in \Delta_i^{(m)}, k + r \neq \ell \,:\, a n_k - b n_\ell = c \right\} = O(1).
\]

If now $k + r = \ell$, then for given $i$ and given $m \leq M(i)$ there is precisely one $c \in \Z$ such that $2^{k+r} - 2^\ell = 0 = \frac{c }{2^{2^{i^4}}b} - (2^r-1)m$. For this particular value of $c$ (which depends on $i$ and $m$), all the pairs $(k,\ell) \in \Delta_i^{(m)}$ with $k+r = \ell$ solve $a n_k - b n_\ell = c$, and the number of such pairs is 
$$
\# \Delta_i^{(m)} - r \leq \# \Delta_i^{(m)}.
$$
Thus we have
\begin{align*}
\sup_{c \in \Z } \sum_{i=1}^I \sum_{m=1}^{M(i)} \# \left\{ k, \ell \in \Delta_i^{(m)}, k + r = \ell \,:\, a n_k - b n_\ell = c \right\} & = O \left(\max_{i \le I}  \max_{m \leq M(i)} \# \Delta_i^{(m)} \right) \\
& = O \left( \frac{N_I}{g_{N_I}} \right),
\end{align*}
where the last equality follows from our particular choice of the size of the blocks $\Delta_i$ and sub-blocks $\Delta_i^{(m)}$. This finishes the proof.
\end{proof}

During the proof of Theorem \ref{thm_not_iid}, we will use the following quantitative CLT for lacunary pure trigonometric sums, due to Gaposhkin.

\begin{lem}(Gaposhkin \cite{gapo_Berry_Esseen})
\label{lemma_gaposhkin}
Let $\lambda_1, \dots, \lambda_N$ be non-negative real numbers such that 
$$
\sum_{k=1}^N \lambda_k^2 = 1. 
$$
Set $\Lambda_N = \max_{1 \leq k \leq N} \lambda_k$. Then
$$
\sup_{t \in \mathbb{R}}\left| \lambda\left[ x \in (0,1),\, \sqrt{2} \sum_{k=1}^N \lambda_k \cos( 2\pi 2^k x) < t \right] - \Phi(t) \right| = O \left(\Lambda_N^{1/4} \right), 
$$
where the implied constant is absolute.
\end{lem}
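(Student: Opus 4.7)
The natural approach is Esseen's smoothing inequality, which reduces the Kolmogorov distance to a characteristic-function estimate. Writing $X_N := \sqrt{2}\sum_{k=1}^N \lambda_k \cos(2\pi 2^k x)$ and letting $\phi_N(s) := \int_0^1 e^{isX_N(x)}\,dx$ be its characteristic function, Esseen's inequality yields, for any $T > 0$,
$$
\sup_{t \in \R}\bigl|\mathbb{P}(X_N \le t) - \Phi(t)\bigr| \;\ll\; \int_{-T}^{T} \frac{|\phi_N(s) - e^{-s^2/2}|}{|s|}\,ds + \frac{1}{T}.
$$
The claimed rate will drop out from a bound of the shape $|\phi_N(s) - e^{-s^2/2}| \ll |s|^3\,\Lambda_N$ on $|s| \le T$, balanced by the choice $T \asymp \Lambda_N^{-1/4}$, which makes both $T^3\Lambda_N$ and $1/T$ of order $\Lambda_N^{1/4}$.

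To estimate $\phi_N$, I would use the Jacobi--Anger expansion $e^{iy\cos\theta} = \sum_{m \in \Z} i^m J_m(y)\,e^{im\theta}$ in each of the $N$ factors of $\phi_N(s) = \int_0^1 \prod_{k=1}^N e^{is\sqrt{2}\lambda_k\cos(2\pi 2^k x)}\,dx$; expanding the product and integrating term by term against $dx$ leaves only the multi-indices $(m_k) \in \Z^N$ satisfying $\sum_k m_k 2^k = 0$, giving
$$
\phi_N(s) \;=\; \sum_{\substack{(m_k) \in \Z^N \\ \sum_k m_k 2^k = 0}} \prod_{k=1}^N i^{m_k} J_{m_k}\bigl(s\sqrt{2}\lambda_k\bigr).
$$
The trivial solution $(m_k) \equiv 0$ contributes $\prod_k J_0(s\sqrt{2}\lambda_k)$; using $\log J_0(y) = -y^2/4 + O(y^4)$ together with $\sum_k \lambda_k^2 = 1$ gives $\prod_k J_0(s\sqrt{2}\lambda_k) = e^{-s^2/2}\bigl(1 + O(s^4\Lambda_N^2)\bigr)$, already matching the Gaussian target well within the required precision.

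For the nontrivial relations, the key observation is that if $k_0$ denotes the smallest index with $m_{k_0}\neq 0$, then $|m_{k_0}|\ge 2$ -- indeed, $m_{k_0} = -2\sum_{j>k_0} m_j 2^{j-k_0-1}$ is forced to be even. Combined with the asymptotic $J_m(y) = O(y^{|m|}/|m|!)$, the simplest nontrivial relation $(m_{k_0},m_{k_0+1}) = (2,-1)$ at adjacent indices contributes a term of size $|s|^3 \lambda_{k_0}^2 \lambda_{k_0+1}$; summing in $k_0$ and applying Cauchy--Schwarz against $\sum_k\lambda_k^2 = 1$ produces the global bound $O(|s|^3 \Lambda_N)$. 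All remaining longer relations have total Bessel-degree at least $4$ with additional factors of $\lambda$, and contribute at most $O(s^4\Lambda_N^2) + \cdots$, which is negligible on the range $|s|\le T \asymp \Lambda_N^{-1/4}$. Inserting these estimates into Esseen's inequality and optimising in $T$ then produces the claimed rate $O(\Lambda_N^{1/4})$.

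The principal obstacle is the combinatorial control of the infinite lattice of nontrivial vanishing relations $\sum_k m_k 2^k = 0$: although each individual relation is suppressed by the forced factor $|m_{k_0}|\ge 2$, one must verify that their accumulated contribution retains the linear-in-$\Lambda_N$ scaling that the $\Lambda_N^{1/4}$ rate demands. This ultimately rests on the near-dissociatedness of the doubling sequence $(2^k)$, whose lattice of integer relations is generated by the elementary identities $2\cdot 2^k - 2^{k+1} = 0$, together with the factorial decay of $J_m(y)$ in $|m|$ that tames the contribution of relations involving large Bessel indices.
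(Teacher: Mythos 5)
The paper does not actually prove this lemma: it is quoted verbatim from Gaposhkin's paper and used as a black box, so there is no in-paper argument to compare yours against. Let me therefore assess your proposal on its own merits.

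Your route --- Esseen's smoothing inequality combined with the Jacobi--Anger expansion of the characteristic function --- is a sensible classical attack, and the core observations are correct. The trivial multi-index reproduces the Gaussian factor up to a multiplicative $1 + O(s^4\Lambda_N^2)$ error (using $\sum_k\lambda_k^4\le\Lambda_N^2$); the parity observation that $m_{k_0}=-2\sum_{j>k_0}m_j2^{j-k_0-1}$ forces $|m_{k_0}|\ge 2$ is exactly right; and the degree-$3$ family $(m_{k_0},m_{k_0+1})=(\pm2,\mp1)$ gives a correction of size $\sum_{k_0}|s|^3\lambda_{k_0}^2\lambda_{k_0+1}\le|s|^3\Lambda_N$, which balances with the Esseen error $1/T$ precisely at $T\asymp\Lambda_N^{-1/4}$.

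The genuine gap --- and you flag it yourself --- is that you have not actually bounded the sum over the full lattice of nontrivial vanishing relations $\sum_k m_k 2^k=0$. The sentence ``all remaining longer relations $\dots$ contribute at most $O(s^4\Lambda_N^2)+\cdots$'' hides an infinite series over an infinite family of relation shapes, e.g.\ $(2,1,-1)$, $(2,-3,1)$, $(4,0,-1)$, $(2^a,0,\dots,0,-1)$, and arbitrarily long chains with entries that can grow. Without the constraint $\sum_k m_k2^k=0$ the termwise sum $\sum_{(m_k)}\prod_k(|s|\lambda_k/\sqrt 2)^{|m_k|}/|m_k|!$ is not small (it behaves like $\prod_k e^{c|s|\lambda_k}$, and $\sum_k\lambda_k$ can be as large as $\sqrt N$), so the constraint is doing essential work and the bookkeeping must be carried out explicitly. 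Concretely, you would need to organize the relations, say by total Bessel degree $D=\sum_k|m_k|$ and by support size, show that the number of shapes at each degree is controlled by the factorial suppression $\prod_k1/|m_k|!$ together with the rigid structure of integer relations among $(2^k)$, and then verify that the resulting series sums to $O(|s|^3\Lambda_N)$ uniformly on $|s|\le T$. As written, this step is asserted rather than proved, and it is precisely where the real difficulty of the lemma lies; until it is filled in, the argument is a correct plan rather than a complete proof.
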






\begin{proof}[Proof of Theorem \ref{thm_not_iid}]
In this proof, we work on the probability space $([0,1], \mathcal{B}\left( [0,1] \right), \mathbb{P})$, where $\mathbb{P} = \lambda$ is the Lebesgue-measure on $[0,1]$. We will also repeatedly make use of the following elementary estimate: Let $ A, B \in \mathcal{B}\left( [0,1] \right)$, then 
\[\mathbb{P} \left[ A \cap  B \right] \geq \mathbb{P} \left[ A \right]- \mathbb{P} \left[ B^c\right].
\]
\par{}
Let $(N_i)_{i \in \N}$ be the sequence of integers defined by $N_i := \max \Delta_i  = 2^{3^{i+1}-1}-1$. Let $d$ be some fixed positive integer, to be specified later, and define 
$$
f(x) := \sum_{j=0}^{d-1} \cos( 2 \pi 2^j x).
$$
In the following, we will give a lower bound for
\[
\mathbb{P} \left[  \sum_{k=1}^{N_I} f( n_k x)  \geq  (1+\varepsilon) \| f \|_2 \sqrt{2 N_I \log g_{N_I}} \right],
\]
when $I \in \N$ is large. We can write
\begin{align*}
\sum_{k=1}^{N_I} f( n_k x) &= \sum_{i=1}^I \sum_{k \in \Delta_i} f(n_k x) =: \sum_{i=1}^I Y_i.
\end{align*}
We note that $ \| \sum_{i=1}^{I-1} Y_i \|_{\infty} \leq d \sum_{i=1}^I 2^{3^i} = o\left( 2^{(3^{I+1})/2} \right) $. Thus, for any fixed $\delta > 0$, if $I \in \N$ is sufficiently large, we have $ \| \sum_{i=1}^{I-1} Y_i \|_{\infty} \leq \delta  \sqrt{ 2 N_I \log g_{N_I}} $, since $\log g_{N_I}$ tends to infinity as $I$ tends to infinity. Hence we get
\begin{align}
\label{eq:decomp_YI}
\mathbb{P} \left[ \sum_{i=1}^{I} Y_i \geq (1 + \varepsilon)\| f\|_2 \sqrt{2 N_I \log g_{N_I}} \right] & \geq \mathbb{P} \left[  Y_I \geq ( (1 + \varepsilon) \| f\|_2 + \delta) \sqrt{2 N_I \log g_{N_I}} \right],
\end{align}
and it remains to study the distribution of $Y_I$ (this is why in our construction, the size of the blocks $\Delta_i,~i \geq 1,$ was chosen in a quickly increasing way). 
Now, using standard trigonometric identities and arguing as in \cite[p.~566--567]{AFP_Lil},  the distribution of $Y_I$ is the same as that of
\begin{align*}
& d \sum_{m=1}^{M(I)} \sum_{k \in \Delta_I^{(m)}}  \cos(2 \pi 2^{k+I m} x) - \sum_{m=1}^{M(I)} \sum_{j=0}^{d-1} \left( \sin(2 \pi 2^j m x) \right)^2  \sum_{k \in \Delta_I^{(m)}} \cos(2 \pi 2^{k+I m} x ) \\
 & \qquad - \sum_{m=1}^{M(I)} \sum_{j=0}^{d-1} \sin(2 \pi 2^j m x)   \sum_{k \in \Delta_I^{(m)}} \sin(2 \pi 2^{k+Im} x) +  E_{I}(x),
\end{align*}
where $E_{I}$ is an error term of the form $\sum_{m=1}^{M(I)} \sum_{k \in \Xi_I^{(m)}} f((2^{k+I m} + m) x)$, with $\Xi_I^{(m)}$ denoting the set of all those values of $k$ which are among either the first or the last $d$ many indices in $\Delta^{(m)}_{I}$ for some $m \in \{1, \dots, M(I)\}$ (for which there does not exist the ``correct'' number of other digits $\ell \in \Delta_I^{(m)}$ with $|k - \ell| \leq d$; see the detailed argument in \cite[p.~566]{AFP_Lil}). For this error term we have
$$
\|E_{I}\|_\infty \leq 2d^2 M(I),
$$
so if $g_{N_I} \leq N_I^{1/4}$, then we have
\begin{equation} \label{size_of_E_small}
\left\|E_{I} \right\|_\infty = O \left( N_I^{1/4} \right).
\end{equation}
On the other hand, if $g_{N_I} \geq N_I^{1/4}$, then writing 
\begin{equation} \label{right-hand}
E_{I}(x) = \sum_{j=0}^{d-1} \sum_{m=1}^{M(I)} \sum_{k \in \Xi_I^{(m)}} \cos(2 \pi 2^j (2^{k+Im}+m) x),
\end{equation}
we use the fact that for each fixed $j$ the sum on the right-hand side of \eqref{right-hand} is a lacunary sum of $2d M(I)$ many summands, where $2dM(I) = \Omega(N_I^{1/4})$ and $2dM(I) = o (N_I)$ (recall here that $M(I) = O(g_{N_I}) = o(N_I)$). Applying \cite[Corollary 2]{AFHM_mgf}, for fixed $\delta > 0$ we obtain
$$
\mathbb{P} \left[  \left|\sum_{m=1}^{M(I)} \sum_{k \in \Xi_I^{(m)}} \cos(2 \pi 2^j (2^{k+Im}+m) x) \right| \geq \frac{\delta}{d}\sqrt{ 2 N_I \log g_{N_I}} \right] = O \left(N_I^{-1/16} \right),
$$
say. Accordingly, by \eqref{right-hand} we also have 
\begin{equation} \label{size_of_E}
\mathbb{P} \left[  \left|E_{I}(x) \right| \geq \delta \sqrt{ 2 N_I \log g_{N_I}} \right] = O \left( N_I^{-1/16} \right),
\end{equation}
still under the assumption that $g_N \geq N_I^{1/4}$. However, we note that \eqref{size_of_E_small} implies \eqref{size_of_E} for any (fixed) choice of $\delta>0$, so that \eqref{size_of_E} actually holds without any assumptions on the size of $g_N$. Accordingly the contribution of the error term $E_I$ is ``small'', and in the sequel we will concentrate on the distribution of the main term, which is $Z_I := Y_I - E_I$.\\

Let $h_I \in \N$ be such that 
\begin{equation*}
  \frac{1}{20 d^2 2^d M(I)} \leq   2^{-h_I} \leq   \frac{1}{10 d^2 2^d M(I)}.
\end{equation*}
Then,
\begin{align}
\notag
&  \mathbb{P} \left[ Z_I \geq \left( (1 + \varepsilon) \| f\|_2 + 2 \delta \right) \sqrt{2 N_I \log g_{N_I}} \right] \\
\notag
&  \geq \mathbb{P} \left[ x \in [0,2^{-h_I}], Z_I \geq \left( (1 + \varepsilon) \| f\|_2 + 2 \delta \right) \sqrt{ 2 N_I \log g_{N_I}} \right]  \\
\label{Eq_dominating_term}
&  \geq \mathbb{P} \left[ x \in [0,2^{-h_I}], d \sum_{m=1}^{M(I)} \sum_{k \in \Delta_I^{(m)}} \cos(2 \pi 2^{k+I m} x) \geq \left( (1 + \varepsilon) \|f\|_2 + 4\delta  \right) \sqrt{2 N_I \log g_{N_I}} \right] \\
\label{Eq_sin_squared_term}
& \qquad  - \mathbb{P} \left[ x \in [0,2^{-h_I}], \left | \sum_{m=1}^{M(I)} \sum_{j=0}^{d-1} \left( \sin(2 \pi 2^j m x) \right)^2  \sum_{k \in \Delta_I^{(m)}} \cos(2 \pi 2^{k+I m} x ) \right| > \delta \sqrt{2 N_I \log g_{N_I}} \right] \\
\label{Eq_sin_term}
& \qquad  - \mathbb{P} \left[ x \in [0,2^{-h_I}], \left | \sum_{m=1}^{M(I)} \sum_{j=0}^{d-1} \sin(2 \pi 2^j m x)   \sum_{k \in \Delta_I^{(m)}} \sin(2 \pi 2^{k+I m} x) \right| > \delta \sqrt{2 N_I \log g_{N_I}} \right].
\end{align}
We first consider the term in \eqref{Eq_dominating_term}. Using periodicity and setting $ s= s(\varepsilon, \delta, I) :=  \Big ( (1 + \varepsilon) \|f\|_2 + 4\delta \Big )$, we get 
\begin{align}
\notag
 & \mathbb{P} \left[ x \in [0,2^{-h_I}], d \sum_{m=1}^{M(I)} \sum_{k \in \Delta_I^{(m)}}  \cos(2 \pi 2^{k+I m} x) \geq s \sqrt{2 N_I \log g_{N_I}} \right] \\
 \notag & = 2^{-h_I} \mathbb{P} \left[ d \sum_{m=1}^{M(I)} \sum_{k \in \Delta_I^{(m)}}  \cos(2 \pi 2^{k+I m} x) \geq s \sqrt{2 N_I \log g_{N_I}} \right] \\
  \notag
  &= 2^{-h_I} \left( \exp \left( -\frac{2s^2}{ d^2} \frac{N_I}{\# \Delta_I} \log g_{N_I} \right) + O \left( \# \Delta_I^{-1/4}\right) \right) \\
  \label{Eq_Est_dominating_term}
  &= 2^{-h_I} \exp \left( -\frac{2s^2}{d^2} \frac{N_I}{\# \Delta_I} \log g_{N_I} \right) \left(  1 + o\left( 1\right) \right),
\end{align}
from Gaposhkin's quantitative CLT (Lemma \ref{lemma_gaposhkin}). The last estimate in \eqref{Eq_Est_dominating_term} holds since $-\frac{2s^2}{d^2} \frac{N_I}{\# \Delta_I} \leq \frac{1}{10}$ for sufficiently large $d \in \N$ and thus the exponential function asymptotically dominates the $O(\# \Delta_I^{-1/4})$-term.\\

We now need to show that the terms in \eqref{Eq_sin_squared_term} and \eqref{Eq_sin_term} do not make a relevant contribution as $I \rightarrow \infty$.

Recall that by construction the smallest index $k$ in $\Delta_I$ is of size at least $2^{3^I}$, and that for sufficiently large $I \in\mathbb N$, we have $2^{3^I} \geq h_I$. We will work on short intervals of the form $\left[\frac{a}{2^{4^{I-1}}},\frac{a+1}{2^{4^{I-1}}} \right] \subset [0,2^{-h_I}]$ for suitable integers $a$. Within such an interval, the function $\sum_{j=0}^{d-1} (\sin (\pi 2^j m x))^2$ is essentially constant. More precisely, writing
$$
s_{a,m,I} := \sum_{j=0}^{d-1} \left(\sin \left(\pi 2^j m \frac{a}{2^{4^{I-1}}}\right)\right)^2,
$$
by considering derivatives, we obtain the Lipschitz estimate
\begin{equation} \label{sami_discrete}
\left| \sum_{j=0}^{d-1} (\sin (\pi 2^j m x))^2 - s_{a,m,I} \right| \leq \frac{2^{d+1} \pi m}{2^{4^{I-1}}} \quad \text{uniformly for $x \in \left[\frac{a}{2^{4^{I-1}}},\frac{a+1}{2^{4^{I-1}}} \right]$}.
\end{equation}

Furthermore, using the estimate $\sin(x) \leq x$ for small $x$, we have
\begin{eqnarray}
s_{a,m,I} & \leq & \sum_{j=0}^{d-1} \left(\pi 2^j m \frac{a}{2^{4^{I-1}}} \right)^2 \nonumber\\
& \leq & 4 d 2^d M(I)^2 \left( \frac{a}{2^{4^{I-1}}} \right)^2. \label{saim}
\end{eqnarray}
For $I \in\mathbb N$, we set
$$
S_{a,I} := \left(\sum_{m=1}^{M(I)}  \sum_{k \in \Delta_I^{(m)}} s_{a,m,I}^2 \right)^{1/2},
$$
and define
$$
\lambda_k := \frac{s_{a,m,I}}{S_{a,I}}, \qquad \text{for $k \in \Delta_I^{(m)}$.}
$$
Then, we clearly have
$$
\sum_{k \in \Delta_I} \lambda_k^2 = 1.
$$
On the other hand, using the estimate $\sin(x) \geq \frac{x}{2}$ for small values of $x$, we obtain
\begin{align*}
S_{a,I} & = \left(\sum_{m=1}^{M(I)}  \sum_{k \in \Delta_I^{(m)}} s_{a,m,I}^2 \right)^{1/2} \\
& = \# \Delta_I^{1/2} \left( \frac{1}{M(I)} \sum_{m=1}^{M(I)} s_{a,m,I}^2\right)^{1/2} \\
& \geq d \# \Delta_I^{1/2} \left( \frac{a}{2^{4^{I-1}}} \right)^2 \left( \underbrace{\frac{1}{M(i)} \sum_{m=1}^{M(i)} m^4}_{\geq M(I)^4/5} \right)^{1/2} \\
& \geq \frac{d}{\sqrt{5}} \# \Delta_I^{1/2} \left( \frac{a}{2^{4^{I-1}}} \right)^2 M(I)^2.
\end{align*}
Combining this estimate with \eqref{saim}, we obtain
\begin{equation}
\label{eq:est_max_lambdak}
\max_{k \in \Delta_I} \lambda_k = \max_{1 \leq m \leq M(I)} \frac{s_{a,m,I}}{S_{a,I}} \leq d2^d \# \Delta_I^{-1/2}.
\end{equation}

Note that \eqref{sami_discrete} implies
$$
\left| \sum_{m=1}^{M(I)} \sum_{j=0}^{d-1} (\sin (\pi 2^j m x))^2 \sum_{k \in \Delta_I^{(m)}} \cos (2 \pi 2^{k+I m} x) - \sum_{m=1}^{M(I)} s_{a,m,I} \sum_{k \in \Delta_I^{(m)}} \cos (2 \pi 2^{k+I m} x) \right| \leq 2^{3^I + I M(I)} \frac{2^{d+1} \pi m}{2^{4^{I-1}}} \leq 1,
$$
for $x \in \left[\frac{a}{2^{4^{I-1}}},\frac{a+1}{2^{4^{I-1}}} \right]$ and for sufficiently large $I \in\mathbb N$. This means, for the quantity in \eqref{Eq_sin_squared_term}, we have
\begin{align*}
& \mathbb{P} \left[  x \in [0,2^{-h_I}], \, \left | \sum_{m=1}^{M(I)} \sum_{j=0}^{d-1} \left( \sin(2 \pi 2^j m x) \right)^2  \sum_{k \in \Delta_I^{(m)}} \cos(2 \pi 2^{k+I m} x ) \right| > \delta \sqrt{2 N_I \log g_{N_I}}  \right] \\
& = \sum_{a=0}^{2^{4^{I-1}-h_I}-1} \mathbb{P} \left[ x \in \left [ \frac{a}{2^{4^{I-1}}}, \, \frac{a+1}{2^{4^{I-1}}} \right],  \, \left | \sum_{m=1}^{M(I)} \sum_{j=0}^{d-1} \left( \sin(2 \pi 2^j m x) \right)^2  \sum_{k \in \Delta_I^{(m)}} \cos(2 \pi 2^{k+I m} x ) \right| > \delta \sqrt{2 N_I \log g_{N_I}} \right] \\
& \leq
\sum_{a=0}^{2^{4^{I-1}-h_I}-1} 
\mathbb{P} \left[ x \in \left[\frac{a}{2^{4^{I-1}}},\frac{a+1}{2^{4^{I-1}}} \right], \, \left| \sum_{m=1}^{M(I)} \sum_{k \in \Delta_I^{(m)}} s_{a,m,I} \cos (2 \pi 2^{k+I m} x) \right| > \delta \sqrt{ \# \Delta_I \log g_{N_I}} \right], \\
\end{align*}
where we used $N_I = \max \Delta_I \geq \# \Delta_I$ together with the rough estimate $1 \leq (\sqrt{2}-1) \delta \sqrt{ \# \Delta_I \log g_{N_I}}$ in the last step.
\\

Thus, for each $0 \leq a \leq 2^{4^{I-1}-h_I}-1$, using Lemma \ref{lemma_gaposhkin} with the weights $\lambda_k$ as specified above, and using \eqref{eq:est_max_lambdak} as well as the estimate $S_{a,I} \leq  \frac{\# \Delta_I}{10d}$, we have
\begin{eqnarray*}
& & \mathbb{P} \left[ x \in \left[\frac{a}{2^{4^{I-1}}},\frac{a+1}{2^{4^{I-1}}} \right], \, \left| \sum_{m=1}^{M(I)} \sum_{k \in \Delta_I^{(m)}} s_{a,m,I} \cos (2 \pi 2^{k+I m} x) \right| > \delta \sqrt{ \# \Delta_I  \log g_{N_I}} \right] \\
& = &  \mathbb{P} \left[x \in \left[\frac{a}{2^{4^{I-1}}},\frac{a+1}{2^{4^{I-1}}} \right], \, \left|\sum_{m=1}^{M(I)} \sum_{k \in \Delta_I^{(m)}} \lambda_k \cos (2 \pi 2^{k+I m}  x) \right| > S_{a,I}^{-1}  \delta \sqrt{ \# \Delta_I  \log g_{N_I}}  \right] \\
& \leq & \mathbb{P} \left[ x \in \left[\frac{a}{2^{4^{I-1}}},\frac{a+1}{2^{4^{I-1}}} \right], \, \left| \sqrt{2} \sum_{m=1}^{M(I)} \sum_{k \in \Delta_I^{(m)}} \lambda_k \cos (2 \pi 2^{k+Im} x) \right| >   \delta \sqrt{200 d^2  \log g_{N_I}} \right] \\ 
& = & 2^{-4^{I-1}} \left( 1 - \Phi(\delta \sqrt{200 d^2  \log g_{N_I}}) + O \left( \Delta_I^{-1/4} \right) \right) \\
& = &  2^{-4^{I-1}} \Phi \left(- \delta \sqrt{200 d^2 \log g_{N_I}} \right)(1 +o(1)) 
\end{eqnarray*}
uniformly in $a$ for sufficiently large $I \in\mathbb N$. The equality in the last step holds by choosing $d \in \N$ sufficiently large (relative to $\delta > 0$), and hence the tail of $\Phi$ asymptotically dominates $\# \Delta_I^{-1/4}$. By summing over all $0 \leq a \leq 2^{4^{I-1}-h_I-1}$, we arrive at
\begin{equation*}
\begin{split}
& \mathbb{P} \left[ x \in [0,2^{-h_I}], \left | \sum_{m=1}^{M(I)} \sum_{j=0}^{d-1} \left( \sin(2 \pi 2^j m x) \right)^2  \sum_{k \in \Delta_I^{(m)}} \cos(2 \pi 2^{k+I m} x ) \right| > \delta \sqrt{ 2N_I \log g_{N_I}} \right] \\
&=2^{-h_I} \Phi \left(- \delta \sqrt{200 d^2 \log g_{N_I}} \right)(1 +o(1)) .
\end{split}
\end{equation*}
For fixed $\delta > 0$, we can now choose $d \in \N$ sufficiently large such that
\[
\Phi \left(- \delta \sqrt{200 d^2 \log g_{N_I}} \right)(1 +o(1)) = \exp \left( - \frac{2s^2}{d^2} \frac{N_I}{\# \Delta_I} \log g_{N_I} \right)o(1),
\]
where we recall that $s= (1 + \varepsilon) \|f\|_2 + 4\delta $. This means, for the quantity in \eqref{Eq_sin_squared_term}, we have established 
\begin{equation}
\label{Eq_est_sin_squared}
\begin{split}
& \mathbb{P} \left[ x \in [0,2^{-h_I}], \left | \sum_{m=1}^{M(I)} \sum_{j=0}^{d-1} \left( \sin(2 \pi 2^j m x) \right)^2  \sum_{k \in \Delta_I^{(m)}} \cos(2 \pi 2^{k+I m} x ) \right| > \delta \sqrt{2 N_I \log g_{N_I}} \right] \\
& \qquad =  \exp \left( - \frac{2s^2}{d^2} \frac{N_I}{\# \Delta_I} \log g_{N_I} \right)o(1).
\end{split}
\end{equation}
Analogously, we can establish the same estimate for the probability in \eqref{Eq_sin_term}, where we get
\begin{equation}
\begin{split}
\label{Eq_est_sin_simple}
& \mathbb{P} \left[ x \in [0,2^{-h_I}], \left | \sum_{m=1}^{M(I)} \sum_{j=0}^{d-1} \sin(2 \pi 2^j m x)   \sum_{k \in \Delta_I^{(m)}} \sin(2 \pi 2^{k+I m} x) \right| > \delta \sqrt{ 2 N_I \log g_{N_I}} \right] \\
 & \qquad =   \exp \left( - \frac{2s^2}{d^2} \frac{N_I}{\# \Delta_I} \log g_{N_I} \right)o(1),
\end{split}
\end{equation}
provided $d \in \N$ is chosen sufficiently large. Combining the estimate for the main term \eqref{Eq_Est_dominating_term} and the previous estimates \eqref{Eq_est_sin_squared}, \eqref{Eq_est_sin_simple}, we end up with
\begin{equation}
\label{Eq_asymp_Y_I}
\mathbb{P} \left[ Z_I \geq s \sqrt{2 N_I \log g_{N_I}} \right] \geq 2^{-h_I} \exp \left( -\frac{2s^2}{d^2} \frac{N_I}{\# \Delta_I} \log g_{N_I} \right) \left(  1 + o\left( 1\right) \right).
\end{equation}
We recall that $Y_I = Z_I + E_I$, where by \eqref{size_of_E} we now get
\[
\mathbb{P} \left[  Y_I \geq ( (1 + \varepsilon) \| f\|_2 + \delta) \sqrt{ 2 N_I \log g_{N_I}} \right] \geq \exp \left( -\frac{2s^2}{d^2} \frac{N_I}{\# \Delta_I} \log g_{N_I} \right) \left(  1 + o\left( 1\right) \right) - O(N_I^{-1/16}).
\]
Since $\frac{s^2}{2d^2} \leq \frac{2}{d}$ (assuming that $\varepsilon$ and $\delta$ are small) and $\frac{N_I}{\# \Delta_I} \rightarrow 1$, we can again choose $d$ large enough such that
\[
\exp \left( -\frac{2s^2}{d^2} \frac{N_I}{\# \Delta_I} \log g_{N_I} \right) \left(  1 + o\left( 1\right) \right) - O(N_I^{-1/16}) = \exp \left( -\frac{2s^2}{d^2} \log g_{N_I} \right) \left(  1 + o\left( 1\right) \right).
\]

Thus, we arrive at
\begin{align*}
\mathbb{P} \left[ S_{N_I} \geq (1+\varepsilon) \| f\|_2 \sqrt{ 2 N_I \log g_{N_I}} \right] 
& = 
2^{-h_I} \exp \left( -\frac{2s^2}{d^2} \log g_{N_I} \right) \left(  1 +  o\left( 1\right) \right).
\end{align*}
Let $\Phi$ be the standard normal distribution function, then it is well-known that
\[
 1-\Phi((1+ \varepsilon) \sqrt{2\log g_{N_I}}) = \exp(-(1+ \varepsilon)^2 \log g_{N_I})(1+o(1)).
\]
By construction we have $2^{-h_I} \geq 4^{-d} \exp( - \log g_{N_I})$, leading to the estimate
\[
\frac{\mathbb{P} \left[ S_{N_I} \geq (1+\varepsilon) \| f\|_2 \sqrt{ 2 N_I \log g_{N_I}} \right]}{1-\Phi((1+ \varepsilon) \sqrt{2\log g_{N_I}})} \geq 4^{-d}\exp \left( \underbrace{\left ( - \frac{2s^2}{d^2} -1 + (1 + \varepsilon)^2 \right)}_{=: \rho(d, \varepsilon, \delta)} \log g_{N_I} \right).
\]
Since $s=(1+ \varepsilon) \| f \|_2 + 4 \delta$, we can choose $d \in \N$ large enough such that $\rho(d, \varepsilon, \delta) > 0$. The proof is concluded by taking the limit as $I \rightarrow \infty$.
\end{proof}

\section*{Acknowledgement}
CA was supported by the Austrian Science Fund (FWF), projects 10.55776/I4945,
10.55776/I5554, 10.55776/P34763, 10.55776/P35322, and 10.55776/PAT5120424. JP was supported by the DFG project 516672205.

\bibliographystyle{abbrv}
\bibliography{bibliography}

\end{document}